\documentclass[preprint]{imsart}

\pdfoutput=1
\RequirePackage[OT1]{fontenc}
\RequirePackage{amsthm,amsmath,amssymb}
\RequirePackage{natbib}
\usepackage{graphicx}
\usepackage{multirow}
\RequirePackage[colorlinks,citecolor=blue,urlcolor=blue]{hyperref}
\usepackage[margin=1in]{geometry}

\def\T{{ \mathrm{\scriptscriptstyle T} }}
\def\R{ \mathbb{R}}
\def\Cov{ \mathrm{Cov}}
\def\Var{ \mathrm{Var}}

\begin{document}

\begin{frontmatter}

\title{Marginal Estimation of Parameter Driven Binomial Time Series Models}
\runtitle{Marginal Estimation of Binomial Time Series Mixed Models}
\author{W.T.M. Dunsmuir\\School of Mathematics and Statistics, University of New South Wales, Sydney, Australia.\\ w.dunsmuir@unsw.edu.au \\
\and \\
J.Y. He\\School of Mathematics and Statistics, University of New South Wales, Sydney, Australia\\
jieyi.he@unsw.edu.au}
\maketitle

\begin{abstract}
This paper develops asymptotic theory for estimation of parameters in regression models for binomial response time series where serial dependence is present through a latent process. Use of generalized linear model (GLM) estimating equations leads to asymptotically biased estimates of regression coefficients for binomial responses. An alternative is to use marginal likelihood, in which the variance of the latent process but not the serial dependence is accounted for. In practice this is equivalent to using generalized linear mixed model estimation procedures treating the observations as independent with a random effect on the intercept term in the regression model. We prove this method leads to consistent and asymptotically normal estimates even if there is an autocorrelated latent process. Simulations suggest that the use of marginal likelihood can lead to GLM estimates result. This problem reduces rapidly with increasing number of binomial trials at each time point but, for binary data, the chance of it can remain over $45\%$ even in very long time series. We provide a combination of theoretical and heuristic explanations for this phenomenon in terms of the properties of the regression component of the model and these can be used to guide application of the method in practice.
\end{abstract}

\begin{keyword}
\kwd{Binomial time series regression}
\kwd{Parameter driven models}
\kwd{Marginal likelihood}
\end{keyword}

\end{frontmatter}

\section{Introduction}

Discrete valued time series are increasingly of practical importance with applications in diverse fields such as analysis of crime statistics, econometric modelling, high frequency financial data, animal behaviour, epidemiological assessments and disease outbreak monitoring,  and modern biology including DNA sequence analysis -- see \cite{dunsmuir2008assessing}. In this paper we focus on time series of binomial counts.

Two broad classes of models for time series of counts, based on the categorization of \cite{cox1981statistical}, are generally discussed in the literature: observation driven models, in which the serial dependence relies on previous observations and residuals; and parameter driven models, in which the serial dependence is introduced through an unobserved latent process. Estimation of parameter driven models is significantly challenging especially when the latent process is correlated. Therefore methods that provides preliminary information of the regression parameters without requiring a heavy computation load would be appealing. For example, the use of generalized linear model (GLM) estimation for obtaining estimates of the regression parameters is discussed in \cite{davis2000autocorrelation} and \cite{davis2009negative} for Poisson and negative binomial observations respectively. GLM estimation is consistent and asymptotically normal for these two types of response distribution even when there is a latent process inducing serial dependence. However as recently pointed out by \cite{wu2014parameter} and discussed in more detail below, use of GLM for binary or binomial data leads to asymptotically biased estimates. \cite{wu2014parameter} propose a semiparametric estimation method for binary response data in which the marginal probability of success modelled non-parametrically. This paper takes a different approach and suggest using estimation based on one-dimensional marginal distributions which accounts for the variance of the latent process but not the serial dependence. Such a procedure is easy to implement using standard software for fitting generalized linear mixed models (GLMM). We show that this method leads to estimates of regression parameters and the variance of the latent process, which are consistent and asymptotically normal even if the latent process includes serial dependence. Additionally the method extends easily to other response distributions such as the Poisson and negative binomial and in these cases will improve efficiency of regression parameters related to GLM estimates.

Suppose $Y_t$ represents the number of successes in $m_t$ trials observed at time $t$. Assume that there are $n$ observations $\{y_1,\ldots,y_n\}$ from the process $\{Y_t\}$ and that $x_{nt}$ is an observed $r$-dimensional vector of regressors, which may depend on the sample size $n$ to form a triangular array, and whose first component is unity for an intercept term. Then given $x_{nt}$ and a latent process $\left\{\alpha_{t}\right\}$, the $Y_{t}$ are independent with density
\begin{equation}
f_{Y_{t}}(y_{t}|x_{nt},\alpha_{t};\theta)=\exp\left\{ y_{t}W_{t}- m_{t}b(W_{t})
+ c(y_{t})\right\}  \label{eq: EFDensity}%
\end{equation}
in which
\begin{equation}
W_{t}=x_{nt}^{\T}\beta+\alpha_{t}, \label{eq: LinPredWt}%
\end{equation}
and $b(W_t)=\log(1+\exp(W_t))$ and $c(y_t)=\log \binom{m_t}{y_t}$. Then
\[
E(Y_{t}|x_{nt},\alpha_{t})= m_{t}\dot{b}(W_{t}), \quad \mathrm{Var}(Y_{t}|x_{nt},\alpha_{t})= m_{t}\ddot{b}(W_{t}).
\]
The process $\{\alpha_t\}$ is not observed and because of this is referred to as a latent process. Often $\{\alpha_t\}$ is assumed to be a stationary Gaussian linear process with zero mean and auto-covariances
\begin{equation*}
\Cov\left(\alpha_t, \alpha_{t+h}\right) = \tau R(h;\psi)
\end{equation*}
where $\tau$ is the marginal variance of $\alpha_t$ and $\psi$ are the parameters for the serial dependence in the model of $\alpha_t$. The specification of stationary Gaussian linear process covers many practical applications and we will assume that for the remainder of the paper. However, Gaussianity it not required for the main asymptotic results presented here, and in general, $\alpha_t$ can be assumed a stationary strongly mixing process. We will discuss this extension further in Section \ref{Sec: discussion}.

We let $\theta=(\beta,\tau,\psi)$ denote the collection of all parameters and let $\theta_0$ be the true parameter vector. For the above model the likelihood is defined in terms of an integral of dimension $n$ as follows,
\begin{equation}\label{eq: fullLiklihood}%
L(\theta): = \int_{\R^{n}}\prod_{t=1} ^{n}\exp\left\{y_{t}W_{t} - m_t b(W_{t})+ c(y_{t})\right\} g(\alpha;\tau,\psi) d\alpha
\end{equation}
where $g(\alpha;\tau,\psi)$ is the joint density of $\alpha=(\alpha_1,\ldots,\alpha_n)$ given the parameters $\tau$ and $\psi$.

Maximization of the likelihood \eqref{eq: fullLiklihood} is computationally expensive. Methods for estimating the high dimensional integrals in \eqref{eq: fullLiklihood} using approximations, Monte Carlo method or both are reviewed in \cite{DunsDVTS2015}. However simple to implement methods that provide asymptotically normal unbiased estimators of $\beta$ and $\tau$ without the need to fit the full likelihood are useful for construction of statistics needed to investigate the strength and form of the serial dependence. They can also provide a initial parameter values for the maximization of the full likelihood \eqref{eq: fullLiklihood}.

For practitioners, GLM estimation has strong appeal as it is easy to fit with standard software packages. GLM estimators of the regression parameters $\beta$ are obtained by treating the observations $y_t$ as being independent with $W_t=x_{nt}^{\T}\beta$ and using the GLM log-likelihood
\begin{equation}\label{eq: loglikglm}%
l_0(\beta): =\log L_0(\beta) = \sum_{t=1} ^{n}\left[y_{t}(x_{nt} ^{\T}\beta) - m_{t}b(x_{nt}^{\T}\beta) + c(y_{t})\right]
\end{equation}
We let $\tilde \beta$ denote value of $\beta$ which maximises \eqref{eq: loglikglm}. This GLM estimate assumes that there is no additional unexplained variation in the responses beyond that due to the regressors $x_{nt}$.

However, as recently noted by \cite{wu2014parameter}, GLM does not provide consistent estimates of $\beta$ when $W_t$ contains a latent autocorrelated component. To be specific, for deterministic regressors $x_{nt}=h(t/n)$ for example, $n^{-1}l_{0}(\beta)$ has limit
\begin{equation*}
Q(\beta)= \bar{m}\int_{0}^{1}\left( \int_{\mathbb{R}} \dot{b}(h(u)^{\T}\beta_0+\alpha) g(\alpha;\tau_0)d\alpha (h(u)^{\T}\beta) - b(h(u)^{\T}\beta)\right)du
+ \overset{M}{\underset{m=1}\sum} \kappa_{m} \overset{m}{\underset{j=0}\sum} \int_{0}^{1}\pi^0(j)c(j)du
\end{equation*}
where $\bar{m} = E(m_{t})$, $\kappa_{m}=P(m_{t}=m)$ and $\pi^0(j)=P(Y_t=j|x_{nt}, \theta_0)$. We show below that $\tilde\beta$ converges to $\beta'$, which maximizes $Q(\beta)$. Equivalently $\beta'$ is the unique vector that solves
\begin{equation} \label{eq: betaprime equation for 2a}
\bar{m} \int_0^1 \left(\int_{\mathbb{R}} \dot{b}(h(u)^{\T}\beta_0 + \alpha)g(\alpha;\tau_0)d\alpha - \dot{b}(h(u)^{\T}\beta^\prime)\right)h(u) du = 0
\end{equation}

In the Poisson or negative binomial cases, $m_t\equiv 1$, and $E(Y_t)=E(\dot{b}(x_{nt}^{\T}\beta_0+\alpha_t))=\dot{b}(x_{nt}^T\beta_0 + \frac{\tau}{2})$, in which $\tau/2$ only modifies the regression intercept but does not influence the response to other regression terms. Such an identity does not usually hold for binomial observations. When $\tau_{0}>0$, the relationship between $\beta'$ and $\beta_0$ in binomial logit regression models has been investigated by several researchers. For example, \cite{neuhaus1991comparison} proved that the logit of the marginal probability $\int (1+e^{-(x^T\beta_0+\alpha)})^{-1}g(\alpha)d\alpha$ can be approximated with $x^T\beta^\ast$, where $\vert\beta^\ast\vert \le \vert\beta_0\vert$ for single covariate $x$ and the equality is only attained when $\tau=0$ or $\beta_0=0$. \cite{wang2003matching} proved that only if $g(\cdot)$ is the ``bridge" distribution, the logit of $\int (1+e^{-\alpha-x^T\beta_0})^{-1} g(\alpha)d\alpha$ equals to $x^T\beta_0$ holds; \cite{wu2014parameter} proposed their MGLM method because GLM\ estimates for binomial observations generated under model \eqref{eq: LinPredWt} are inconsistent.

To overcome the inconsistency observed in GLM\ estimation, in this paper we propose use of marginal likelihood estimation, which maximises the likelihood constructed under the assumptions that the process $\alpha_t$ consists of independent identically distributed random variables. Under this assumption the full likelihood \eqref{eq: fullLiklihood} is replaced by the ``marginal" likelihood
\begin{equation}
L_1(\delta) = \prod_{t=1}^{n}f(y_{t}|x_{t}, \delta) = \prod_{t=1}^{n} \int_{\R} \exp\left( y_{t}W_{t}-m_t b(W_{t}) + c(y_{t})\right) g(\alpha_t;\tau) d\alpha_t. \label{eq: marginal likelihood}%
\end{equation}
and the corresponding ``marginal" log-likelihood function is
\begin{equation}
l_1(\delta) = \sum_{t=1}^{n}\log f(y_{t}|x_{t},\delta)=\sum_{t=1} ^{n}\log \int_{\R} \exp\left( y_{t}W_{t}- m_tb(W_{t}) + c(y_{t}) \right) g(\alpha_t;\tau) d\alpha_t.  \label{eq: marginal log likelihood}%
\end{equation}
where $\delta = (\beta,\tau)$ and $g(\cdot,\tau)$ is the density for a mean zero variance $\tau$ normal random variable. Let $\hat\delta$ be the estimates obtained by maximising
\eqref{eq: marginal log likelihood} over the compact parameter space $\Theta:=\{\beta \in \R^r: \|\beta - \beta_0\| \le d_1\}\bigcap \{\tau\ge 0: |\tau - \tau_0| \le d_2\}$, where $d_1<\infty$, $d_2<\infty$.

Marginal likelihood estimators of $\hat\delta$ can be easily obtained with standard software packages for fitting generalized linear mixed models. Since these marginal likelihood estimates $\hat\delta$ are consistent, they can be used as the starting value of full likelihood based on \eqref{eq: fullLiklihood}. Additionally, the asymptotic distribution of $\hat\delta$, and the standard deviation derived from the asymptotic covariance matrix can be used to assess the significance of regression parameters $\hat\beta$. Moreover, in another paper we have developed a two-step score-type test to first detect the existence of a latent process and if present whether there is serial dependence. The asymptotic results of this paper are needed in order to derive the large sample chi-squared distribution of the second step, the score test for detecting serial dependence.

Large sample properties of the marginal likelihood estimates $\hat\delta$ are provided in Section \ref{Sec: Asympototic Theory for Marginal Likelihood Estimates}; The simulations of Section \ref{Sec: Simulations} show that marginal likelihood estimates lead to a high probability of $\hat\tau=0$ when the number of trials, $m_t$, is small. In particular $P(\hat\tau=0)$ can be almost 50\% for binary data. Hence Section \ref{Sec: Pile up probability} focuses on obtaining asymptotic approximations to the upper bound for $P(\hat\tau=0)$, which is useful to quantify the proportion of times the marginal likelihood procedures `degenerates' to the GLM procedure. Also in Section \ref{Sec: Pile up probability} we derive a theoretical mixture distribution which provided better approximation in this situation. Section \ref{Sec: Simulations} presents simulation evidence to demonstrate the accuracy of the asymptotic theory and the covariance matrix of the marginal likelihood estimate. Section \ref{Sec: MGLM est} discusses the difference between marginal likelihood estimation and MGLM estimation of \cite{wu2014parameter}. Section \ref{Sec: discussion} concludes.

\section{Asymptotic Theory for Marginal Likelihood Estimates} \label{Sec: Asympototic Theory for Marginal Likelihood Estimates}

We present the large sample properties for the marginal likelihood estimates of $\beta$ and $\tau$ obtained by maximizing \eqref{eq: marginal log likelihood}. We begin by presenting the required conditions on the latent process $\{\alpha_{t}\}$, the regressors $\{x_{nt}\}$ and the sequence of binomial trials $\{m_{t}\}$.

A process $\{\alpha_t\}$ is strongly mixing if
\[
\nu(h)=\sup_{t} \sup_{A\in \emph{F}_{-\infty}^{t}, B\in \emph{F}_{t+h}^\infty}|P(AB)-P(A)P(B)|\to 0
\]
as $h\to \infty$, where $\emph{F}_{-\infty}^{t}$ and $\emph{F}_{t+h}^\infty$ are the $\sigma$-fields generated by $\{\alpha_s, s\leq t\}$ and $\{\alpha_s, s\geq t+h\}$ respectively.

In practice, the number of trials $m_{t}$ may vary with time. To allow for this we introduce:
\newtheorem{cond}{Condition} \begin{cond}\label{Cond: mt}
The sequence of trials $\{m_t: 1\le m_t\le M\}$ is a stationary strongly mixing process independent to $\{X_t\}$; the mixing coefficients satisfy: $\overset{\infty}{\underset{h=0}\sum}\nu(h)< \infty$. Let $\kappa_j=P(m_t=j)$, assume $\kappa_M>0$, $\overset{M}{\underset{j=1}\sum} \kappa_j=1$.
\end{cond}
\noindent An alternative would be to take $m_t$ as deterministic and asymptotically stationary in which case the $\kappa_j$ would be limits of finite sample frequencies of occurrences $m_t = j$. Both specifications obviously include the case where $m_t=M$ for all $t$, of which $M=1$ yields binary
responses.

As in previous literature \cite{davis2000autocorrelation}, \cite{davis2009negative} and \cite{wu2014parameter} we allow for both deterministic and stochastic regressors:
\begin{cond}\label{Cond: Reg Trend Type}
The regression sequence is specified in one of two ways:
\begin{description}
		\item (a) Deterministic covariates defined with functions:  $x_{nt}=h(t/n)$ for some specified piecewise continuous vector function $h: [0,1]\to \R^r$.
		
		\item (b) Stochastic covariates which are a stationary vector process: $x_{nt}=x_t$ for all $n$ where $\{x_t\}$ is an observed trajectory of a stationary process for which $E(e^{s^T X_t}) <\infty$ for all $s\in \mathbb{R}^r$.
	\end{description}
\end{cond}

\begin{cond}\label{Cond: Reg Full Rank}
Let $r=\dim(\beta)$. The regressor space $\mathbb{X}=\{x_{nt}: t\ge 1\}$, assume $\texttt{rank}(\texttt{span}(\mathbb{X}))=r$.
\end{cond}
The full rank of the space spanned by the regressors required for Condition \ref{Cond: Reg Full Rank} holds for many examples. For instance, for deterministic regressors generated by functions given in Condition 2a, such $X_i$, $i=1,\ldots,r$ exist if there are $r$ different values of $u_i= (u_1,\ldots, u_r)$ such that the corresponding function $(h(u_1),\ldots,h(u_r))$ are linearly independent. For stochastic regressors generated with a stationary process given in Condition 2b, linearly independent $X_i$, $i=1,\ldots,r$ can be found almost surely if $\mathrm{Cov}(X)>0$.

\begin{cond} \label{Cond: Mixing} The latent process $\{\alpha_t\}$, is strictly stationary, Gaussian and strongly mixing with the mixing coefficients satisfying  $\sum_{h=0}^\infty \nu(h)^{\lambda/(2+\lambda)} < \infty$ for some $\lambda>0$.
\end{cond}

Conditions for a unique asymptotic limit of the marginal likelihood estimators are also required. Denote the marginal probability of $j$ successes in $m_t$ trials at time $t$ as
\begin{equation*}
\pi_{t}(j)=\int_{\R} e^{jW_t - m_tb(W_t)+c(j)}\phi(z_t) dz_t, \quad j=1,\ldots,m_t.
\end{equation*}
where $W_t=x_{nt}^T\beta + \tau^{1/2} z_t$, and $z_t = \alpha_t/\tau^{1/2}$ has unit variance. If $\{\alpha_t\}$ is Gaussian, so is the process $\{z_t\}$ and $z_t\sim N(0,1)$ with density function $\phi(\cdot)$. Similarly let $\pi^0_{t}(j)$ be the marginal probability evaluated with the true values $\beta_0$ and $\tau_0$ at time $t$. Define

\begin{equation} \label{eqn: Qn delta}
Q_n(\delta)=\frac{1}{n} l_1(\delta)
\end{equation}
conditional on $m_t$ and $x_{nt}$,

\begin{equation}\label{eqn: E Qn delta}
E(Q_n(\delta))=\frac{1}{n}\sum_{t=1}^n \sum_{j=0}^{m_t}\pi^0_{t}(j)\log \pi_{t}(j), \quad \delta\in \Theta.
\end{equation}

Under Conditions \ref{Cond: mt} and \ref{Cond: Reg Trend Type}, $E(Q_n(\delta))\overset{a.s.}\to Q(\delta)$. Let $\pi(j,\cdot)=P(Y=j|\cdot, \delta)$, and $\pi^0(j,\cdot)$ is evaluated with $\delta_0$, then under Condition 2a,
\begin{equation} \label{eqn: lim Q delta Cond 2a}
Q(\delta) = \sum_{m=1}^M \kappa_m \int_0^1 \sum_{j=0}^m \pi^0(j,h(u))\log \pi(j,h(u)) du
\end{equation}
and, under Condition 2b,
\begin{equation}\label{eqn: lim Q delta Cond 2b}
Q(\delta) = \sum_{m=1}^M \kappa_m \int_{\mathbb{R}^r} \sum_{j=0}^m \pi^0(j,x)\log \pi(j,x) dF(x)
\end{equation}
the proof is included in the proof of Theorem \ref{Thm: Consist&Asym Marginal Like Estimator}.

\begin{cond}\label{Cond: Ident and Const}
$Q(\delta)$ has a unique maximum at $\delta_0 = (\beta_0, \tau_0)$, the true value.
\end{cond}

We now establish the consistency and asymptotic normality of the marginal likelihood estimator.
\newtheorem{thm}{Theorem} \begin{thm}[Consistency and asymptotic normality of marginal likelihood estimators] \label{Thm: Consist&Asym Marginal Like Estimator}%
Assume $\tau_{0}>0$ and Conditions \ref{Cond: mt} to \ref{Cond: Ident and Const}, then $\hat \delta \overset{\textrm{a.s.}}{\to} \delta_0$ and $\sqrt{n}(\hat\delta - \delta_0)
\overset{d}\rightarrow N(0, \Omega_{1,1}^{-1}\Omega_{1,2}\Omega_{1,1}^{-1})$
as $n \to \infty$, in which
\begin{equation}\label{eq: PD InfMat}%
\Omega_{1,1} = \underset{n\to \infty}\lim \frac{1}{n}\sum_{t=1}^{n}E(\dot{l}_{t}(\delta_0) \dot{l}_{t}^{\T}(\delta_0)) > 0
\end{equation}
\begin{equation}\label{eq: PD CovMat}%
\Omega_{1,2}=\underset{n\to \infty}\lim \frac{1}{n}\sum_{t=1}^{n}\sum_{s=1}^{n} \mathrm{Cov} (\dot{l}_{t}(\delta_0), \dot{l}_{s}(\delta_0))
\end{equation}
where
\begin{equation}\label{eq: deriv1 loglik t}%
\dot{l}_{t}(\delta_0) = \frac{\partial \log \pi_{t}(y_{t})}{\partial\delta}\vert_{\delta_0}= f^{-1}(y_{t}|x_{nt},\delta_0)\int (y_{t}- m_{t}\dot{b}(x_{nt}^{\T}\beta_0+\tau_{0}^{1/2}z_{t})) \binom{x_{nt}}{\frac{z_t}{2\sqrt{\tau_{0}}}}f(y_{t}|x_{nt},z_t,\delta_0)\phi(z_t)dz_t
\end{equation}
\end{thm}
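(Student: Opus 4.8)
The plan is to treat $\hat\delta$ as an M-estimator maximizing the misspecified criterion $Q_n(\delta)=n^{-1}l_1(\delta)$ and to obtain both conclusions from the standard trio: a uniform strong law of large numbers over the compact set $\Theta$, a central limit theorem for the score $n^{-1/2}\dot l_1(\delta_0)=n^{-1/2}\sum_t \dot l_t(\delta_0)$, and a locally uniform law for the Hessian. The structural fact I would highlight first is that, although the marginal likelihood \eqref{eq: marginal log likelihood} discards the serial dependence of $\{\alpha_t\}$, it retains the \emph{correct} one-dimensional marginals $\pi_t(\cdot)$. Hence each score contribution is unbiased, $E(\dot l_t(\delta_0))=0$, and the per-observation information identity $E(\dot l_t(\delta_0)\dot l_t^{\T}(\delta_0))=-E(\ddot l_t(\delta_0))$ holds. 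This makes $\delta_0$ the maximizer of the limit $Q(\delta)$ of Condition \ref{Cond: Ident and Const}, explains why the ``bread'' of the sandwich is exactly $\Omega_{1,1}$, and isolates the misspecification into the ``meat'' $\Omega_{1,2}$, which is the long-run variance of the serially dependent scores rather than the naive sum of their variances.

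For consistency I would combine the stated convergence $E(Q_n(\delta))\to Q(\delta)$ with a uniform strong law for the centered criterion $Q_n(\delta)-E(Q_n(\delta))$. Each summand $\log\pi_t(y_t;\delta)$ is a measurable function of $(y_t,m_t,x_{nt})$, and since $Y_t$ is generated from $\alpha_t$ together with conditionally independent sampling noise, the centered summands form a strongly mixing array inheriting the coefficients of $\{\alpha_t\}$ and $\{m_t\}$ under Conditions \ref{Cond: mt} and \ref{Cond: Mixing}; a strong law then gives $Q_n(\delta)-E(Q_n(\delta))\overset{\mathrm{a.s.}}{\to}0$ and hence $Q_n(\delta)\overset{\mathrm{a.s.}}{\to}Q(\delta)$, consistent with \eqref{eqn: lim Q delta Cond 2a} and \eqref{eqn: lim Q delta Cond 2b}. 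I would upgrade to uniform convergence over $\Theta$ by exhibiting an integrable envelope for $\sup_{\delta\in\Theta}|\log\pi_t(y_t;\delta)|$; because $b(W)=\log(1+e^{W})$ is convex with $0<\dot b<1$ and $\pi_t(j)\in(0,1)$, the envelope grows only linearly in $|x_{nt}^{\T}\beta|$ and is integrable under Condition \ref{Cond: Reg Trend Type}, while stochastic equicontinuity follows from differentiability of $\pi_t$ in $\delta$ on the compact set. The argmax theorem, together with the unique maximum of Condition \ref{Cond: Ident and Const}, then yields $\hat\delta\overset{\mathrm{a.s.}}{\to}\delta_0$.

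For asymptotic normality I would expand the first-order condition $\dot l_1(\hat\delta)=0$ by the mean value theorem, $0=n^{-1/2}\dot l_1(\delta_0)+(n^{-1}\ddot l_1(\delta^{*}))\sqrt n(\hat\delta-\delta_0)$ with $\delta^{*}$ on the segment joining $\hat\delta$ and $\delta_0$, and solve for $\sqrt n(\hat\delta-\delta_0)$. Consistency together with a locally uniform law for the Hessian (the same envelope argument applied to $\ddot l_t$) gives $n^{-1}\ddot l_1(\delta^{*})\overset{\mathrm{a.s.}}{\to}-\Omega_{1,1}$ by the per-observation identity above, while Condition \ref{Cond: Reg Full Rank} and $\tau_0>0$ guarantee $\Omega_{1,1}>0$. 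The limit $n^{-1/2}\dot l_1(\delta_0)\overset{d}{\to}N(0,\Omega_{1,2})$ is the analytic core: the mean-zero scores form a strongly mixing (triangular, in the deterministic case) array, to which I would apply a central limit theorem of Ibragimov type. The rate Condition \ref{Cond: Mixing}, $\sum_h\nu(h)^{\lambda/(2+\lambda)}<\infty$, is chosen exactly to pair with a $(2+\lambda)$-moment bound on $\dot l_t(\delta_0)$, which I would read off \eqref{eq: deriv1 loglik t}: the factor $y_t-m_t\dot b(\cdot)$ is bounded by $M$, the weight $z_t/(2\sqrt{\tau_0})$ has Gaussian posterior moments, and $x_{nt}$ contributes finite moments under Condition \ref{Cond: Reg Trend Type}. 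The same bounds, via Davydov's covariance inequality, make the double sum \eqref{eq: PD CovMat} absolutely convergent, so $\Omega_{1,2}$ is finite and well defined. Slutsky's theorem then assembles the sandwich $\Omega_{1,1}^{-1}\Omega_{1,2}\Omega_{1,1}^{-1}$.

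I expect the principal obstacle to be the central limit theorem in the deterministic-regressor case of Condition \ref{Cond: Reg Trend Type}, where $x_{nt}=h(t/n)$ turns $\{\dot l_{t}(\delta_0)\}$ into a genuine triangular array rather than a stationary sequence; verifying the Lindeberg and variance-convergence conditions uniformly across rows, while simultaneously showing that the normalized partial-sum variance converges to $\Omega_{1,2}$, requires careful control of the interplay between the smooth deterministic modulation $h(t/n)$ and the stationary mixing fluctuations of $\{\alpha_t,m_t\}$. The envelope and equicontinuity arguments underlying the uniform laws are a secondary burden, but reduce to elementary bounds on $b$, $\dot b$, $\ddot b$ and Gaussian tail integrals.
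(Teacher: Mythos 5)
Your proposal is correct and follows essentially the same route as the paper: pointwise convergence of $E(Q_n(\delta))$ to $Q(\delta)$, a strong law for the centered criterion over the compact $\Theta$ combined with the unique-maximum condition to get almost sure consistency, and then a Taylor expansion of the score with the information identity (valid because the one-dimensional marginals are correctly specified) giving the $\Omega_{1,1}$ bread, a mixing-process covariance inequality giving finiteness of $\Omega_{1,2}$, and a CLT for strongly mixing arrays (the paper uses Davidson's) giving the sandwich limit. The only differences are cosmetic choices of which standard uniform-law and mixing-CLT references to invoke.
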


To use this theorem in practice requires at least that the identifiability condition holds and
that the covariance be estimated from a single series. We address these aspects in detail
in Section \ref{SSc: Ident} and \ref{SSc: CovMat Est}. In addition, particularly for binary responses, marginal likelihood estimators produce a high probability of $\hat\tau=0$. We address
this in detail in Section \ref{Sec: Pile up probability}, where we propose an improved asymptotic distribution based on a mixture.

\subsection{Asymptotic identifiability} \label{SSc: Ident}%

We now discuss circumstances under which Condition \ref{Cond: Ident and Const} holds. Now for any $\delta\in \Theta$, $Q(\delta)\le Q(\delta_0)$, since for any $x$, $\sum_{j=0}^{m}\pi^0(j,x)\log \pi(j,x) \le \sum_{j=0}^{m}\pi^0(j,x)\log \pi^0(j,x)$. Thus the model is identifiable if and only if for any $\delta \in \Theta$, $Q(\delta)- Q(\delta_0)< 0$ if $\delta \ne \delta_0$.

\newtheorem{lem}{Lemma} \begin{lem}\label{lem: Identifiable Binom}
Assume $M\ge 2$ and Condition \ref{Cond: Reg Full Rank}, then Condition \ref{Cond: Ident and Const} holds for marginal likelihood \eqref{eq: marginal log likelihood}.
\end{lem}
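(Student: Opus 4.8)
The plan is to reduce global identifiability of $Q$ to injectivity of a low-dimensional moment map, and then to establish that injectivity by a monotonicity argument in which $M\ge2$ enters essentially. First I would invoke the Gibbs inequality already recorded before the lemma: since for each $x$ one has $\sum_j \pi^0(j,x)\log\pi(j,x)\le\sum_j\pi^0(j,x)\log\pi^0(j,x)$ with equality iff $\pi(\cdot,x)=\pi^0(\cdot,x)$, the limits \eqref{eqn: lim Q delta Cond 2a}--\eqref{eqn: lim Q delta Cond 2b} give $Q(\delta)=Q(\delta_0)$ iff $\pi(j,x;\delta)=\pi^0(j,x)$ for every $j$, every $m$ with $\kappa_m>0$ (in particular $m=M$, since $\kappa_M>0$), and every $x$ in the regressor support. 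Next I would convert distribution-matching into moment-matching: writing $P=\dot b(W)$ with $W=x^{\T}\beta+\tau^{1/2}z$, the marginal law is the binomial mixture $\pi(j,x)=\binom{M}{j}E[P^{j}(1-P)^{M-j}]$, whose falling-factorial moments satisfy $E[Y(Y-1)\cdots(Y-k+1)]=M(M-1)\cdots(M-k+1)E[P^{k}]$; hence matching $\{\pi(j,x)\}_{j=0}^{M}$ is equivalent to matching $E[P^{k}]$ for $k=1,\dots,M$. Because $M\ge2$, this matches at least the mean $\mu_1(x)=E[P]$ and the overdispersion $v=\Var(P)=E[P^2]-\mu_1^2$; for binary data only $\mu_1$ would be available, which is precisely why the assumption $M\ge2$ is required.

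Second, I would use the mean to constrain $\beta$. Put $\mu_1(\eta,\tau)=\int \dot b(\eta+\tau^{1/2}z)\phi(z)\,dz$, which is strictly increasing in $\eta$, so matching $\mu_1(x^{\T}\beta,\tau)=\mu_1(x^{\T}\beta_0,\tau_0)$ over the support gives $x^{\T}\beta=G(x^{\T}\beta_0)$ for a strictly increasing $G$. Applying this to pairs $x,x'$ with $x^{\T}\beta_0=x'^{\T}\beta_0$ yields $(x-x')^{\T}\beta=0$ whenever $(x-x')^{\T}\beta_0=0$; since by Condition \ref{Cond: Reg Full Rank} the regressor differences span the hyperplane $\{w_1=0\}$, the non-intercept slopes must be proportional, $\tilde\beta=c\tilde\beta_0$ with $c>0$, and therefore $x^{\T}\beta\equiv a+c\,x^{\T}\beta_0$ as an identity, with $a=\beta^{(1)}-c\beta_0^{(1)}$ (the degenerate case $\tilde\beta_0=0$ is handled directly by the monotonicity of $\mu_1$).

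Finally comes the scalar core. Substituting the affine relation, matching reduces to $\mu_1(a+c\eta_0,\tau)=\mu_1(\eta_0,\tau_0)$ and $\mu_2(a+c\eta_0,\tau)=\mu_2(\eta_0,\tau_0)$ at the attained index values $\eta_0=x^{\T}\beta_0$, and the conclusion $a=0$, $c=1$, $\tau=\tau_0$ would follow from injectivity of the map $(\eta,\tau)\mapsto(\mu_1(\eta,\tau),v(\eta,\tau))$. To prove this I would exploit the heat-type identities $\partial_\tau\mu_k=\tfrac12\partial_{\eta\eta}\mu_k$ (valid since each $\mu_k$ is a Gaussian convolution) together with $\partial_\eta\mu_1=E[\ddot b(W)]>0$, and show that along each level curve $\{\mu_1=\text{const}\}$ the overdispersion $v$ is strictly increasing in $\tau$ — that is, spreading the latent variance strictly increases $\Var(P)$ while the mean is held fixed. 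Equivalently this says the Jacobian $\partial(\mu_1,v)/\partial(\eta,\tau)$ is nonsingular, giving local injectivity, which the monotonicity upgrades to global injectivity.

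I expect the main obstacle to be exactly this monotonicity/Jacobian-sign step: it requires expressing $\partial_\eta\mu_1\,\partial_\tau v-\partial_\eta v\,\partial_\tau\mu_1$ through expectations of higher derivatives of $b(\cdot)$ and showing it keeps a constant sign, which is delicate because $\Var(P)$ is neither convex nor monotone as a plain function of $\eta$. I note that when the index $\eta_0$ ranges over an interval there is a lighter alternative: $\mu_1(\cdot,\tau)$ is real-analytic, so the matching extends to all of $\R$, and the tail relations $1-\mu_1(\eta,\tau)\sim e^{\tau/2}e^{-\eta}$ as $\eta\to+\infty$ and $\mu_1(\eta,\tau)\sim e^{\tau/2}e^{\eta}$ as $\eta\to-\infty$ already force $c=1$, $a=0$, $\tau=\tau_0$ from the mean alone; I would include this only as a remark, since it does not cover the case of finitely many index values, where the second moment supplied by $M\ge2$ is genuinely indispensable.
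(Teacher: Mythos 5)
Your reduction is the same as the paper's up to the moment-matching step: Gibbs' inequality forces $\pi(j,x)=\pi^0(j,x)$ for all $j$ and all $x$ in the support, and (via factorial moments, or by iterating over $j$ as the paper does) this is equivalent to $E[P^k]=E[P_0^k]$ for $k=1,\dots,M$, where $P=\dot b(x^{\T}\beta+\sqrt{\tau}z)$. After that, however, your argument has a genuine gap in two places. First, the detour through the mean alone to establish the affine relation $x^{\T}\beta\equiv a+c\,x^{\T}\beta_0$ uses more than Condition \ref{Cond: Reg Full Rank}: you need pairs $x,x'$ in the support with $x^{\T}\beta_0=x'^{\T}\beta_0$ whose differences span the hyperplane $\{w_1=0\}$, and a finite or discrete regressor set satisfying only the rank condition need not contain any such pairs. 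Second, and more importantly, the step you yourself flag as the main obstacle --- injectivity of $(\eta,\tau)\mapsto(\mu_1,\mu_2)$, equivalently nonsingularity of the Jacobian --- is precisely where the entire content of the lemma lives, and you do not supply it. A proof that stops at ``I would show the Jacobian keeps a constant sign'' has not proved the lemma.

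The paper closes exactly this hole, and does so pointwise in $x$ (thereby avoiding your affine detour altogether). Fix $u$, set $a=h(u)^{\T}\beta$, $\sigma=\sqrt{\tau}$, and $d_j(a,\sigma)=E[\dot b(a+\sigma z)^j]-E[\dot b(a_0+\sigma_0 z)^j]$. A mean-value expansion writes $(d_1,\dots,d_M)^{\T}=J(\eta^\ast)(a-a_0,\sigma-\sigma_0)^{\T}$ where the $M\times 2$ matrix $J(\eta^\ast)$ has $j$-th row $j\bigl(E[\dot b^{\,j-1}\ddot b],\,E[\dot b^{\,j-1}\ddot b\,z]\bigr)$ evaluated at an intermediate point. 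The key observation is that under the tilted densities $g_j\propto \dot b^{\,j-1}\ddot b\,\phi$, the ratio of the second column to the first equals $E_{g_j}[z\,\dot b]/E_{g_j}[\dot b]$ modulo shifts, and since $\dot b(a^\ast+\sigma^\ast z)$ is increasing in $z$ this ratio is \emph{strictly increasing} in $j$ by a positive-correlation inequality. Hence no two rows of $J(\eta^\ast)$ are proportional, $J(\eta^\ast)$ has rank $2$ as soon as $M\ge 2$, and $d_1=\cdots=d_M=0$ forces $a=a_0$ and $\sigma=\sigma_0$ for each $u$; Condition \ref{Cond: Reg Full Rank} then gives $\beta=\beta_0$. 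This is the same Jacobian idea you propose, but carried out on the raw moments $E[P^j]$ rather than on $(\mu_1,\operatorname{Var}(P))$, and the monotone-ratio/correlation trick is the missing ingredient you would need to import to complete your version. Your tail-asymptotics remark is correct but, as you note, applies only when the index ranges over an interval, so it cannot substitute for this step under the stated hypotheses.
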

\noindent{\small The proof is outlined in Appendix A.}

For binary data, $M=1$, then $Q(\delta)=Q(\delta_0)$ if $\pi(1,x) = \pi^0(1,x)$, $\forall x\in \mathbb{X}$. Hence model \eqref{eq: marginal log likelihood} is not identifiable if $\exists \delta\ne \delta_0$ such that $\pi(1) = \pi^0(1)$ everywhere on $\mathbb{X}$, that is,  for each distinct value $X_i \in\mathbb{X}$, such $(\beta,\tau)\ne (\beta_0,\tau_0)$ can be found to establish
\begin{equation}\label{eq: pi1 to pi01}%
\pi(1,X_i)= \int \frac{e^{X_i^T\beta+\sqrt{\tau} z}}{1+e^{X_i^T\beta +\sqrt{\tau} z}}\phi(z)dz =
\int \frac{e^{X_i^T\beta_0 +\sqrt{\tau_0} z}}{1+e^{X_i^T\beta_0+\sqrt{\tau_0} z}}\phi(z)dz = \pi^0(1,X_i).
\end{equation}

If $\tau=\tau_0$, then \eqref{eq: pi1 to pi01} implies $X_i^{\T}\beta=X_i^{\T}\beta_0$. Under Condition \ref{Cond: Reg Full Rank}, $r$ linearly independent $X_i$, $\mathrm{X}=(X_1,\ldots,X_r)$ can be found on $\mathbb{X}$ to establish $\mathrm{X}^{\T}\beta=\mathrm{X}^{\T}\beta_0$. Then $(\beta-\beta_0)$ has a unique solution of $\textbf{0}_r$. Hence if $\tau=\tau_0$, \eqref{eq: pi1 to pi01} holds if and only if $\beta=\beta_0$, and Condition
\ref{Cond: Ident and Const} holds.

If $\tau\ne \tau_0$, for each $X_i$, a unique solution of $a_i$, $a_i = X_i^{\T}\beta \ne X_i^{\T}\beta_0$ can be found for \eqref{eq: pi1 to pi01}. Assume the regressor space $\mathbb{X}$ is a set of discrete vectors such that $\mathbb{X}=\{X_i: 1\le i\le L\}$, where $X_i\ne X_j$ if $i\ne j$. Let $\mathrm{X}=(X_1,\ldots,X_L)$ be a $r\times L$ matrix. Then \eqref{eq: pi1 to pi01} holds for each $X_i\in\mathbb{X}$ if there exists such solution of $\beta$ that $\mathrm{X}^T\beta=\mathrm{A}$, $\mathrm{A}=(a_1, \ldots, a_L)$. Since $\tau\ne \tau_0$, $\beta=\beta_0$ is not excluded from the possible solutions of $\beta$. If $L=r$, a unique solution of $\beta$ exists, hence there exists such $\delta\ne \delta_0$ that establishes \eqref{eq: pi1 to pi01} for all $X_i\in \mathbb{X}$, therefore \eqref{eq: marginal log likelihood} is not identifiable. If $L>r$, note $\texttt{rank}(\mathrm{X})=r$, then $\mathrm{X}^{\T}\beta=\mathrm{A}$ is overdetermined. Thus a solution of $\beta$ does not always exist and in these situations Condition \ref{Cond: Ident and Const} holds, however a general proof without further conditions on the regressors is difficult. Instead, we provide a rigourous proof to show Condition \ref{Cond: Ident and Const} holds for binary data when the regressor space $\mathbb{X}$ is connected.
\begin{lem}\label{lem: Identifiable Binary}
Let $M=1$. In addition to Condition \ref{Cond: Reg Full Rank}, $\mathbb{X}$ is assumed to be a connected subspace of $\mathbb{R}^r$, then Condition \ref{Cond: Ident and Const} holds.
\end{lem}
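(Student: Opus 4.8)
The plan is to argue by contradiction: suppose Condition~\ref{Cond: Ident and Const} fails, so that some $\delta=(\beta,\tau)\neq\delta_0$ satisfies $\pi(1,x)=\pi^0(1,x)$ for every $x\in\mathbb{X}$. The discussion preceding the lemma already shows that $\tau=\tau_0$ forces $\beta=\beta_0$ via Condition~\ref{Cond: Reg Full Rank}, so it suffices to rule out $\tau\neq\tau_0$. Writing
\begin{equation*}
\Pi_0(v)=\int_{\R}\dot b\left(v+\sqrt{\tau_0}\,z\right)\phi(z)\,dz,\qquad \Pi(a)=\int_{\R}\dot b\left(a+\sqrt{\tau}\,z\right)\phi(z)\,dz,
\end{equation*}
both maps are strictly increasing real-analytic bijections of $\R$ onto $(0,1)$ with $\Pi_0(0)=\Pi(0)=\tfrac12$ and the antisymmetry $\Pi_0(-v)=1-\Pi_0(v)$ (and likewise for $\Pi$), since $\dot b(-w)=1-\dot b(w)$ and $\phi$ is even. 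With $v=x^\T\beta_0$ and $a=x^\T\beta$, equation~\eqref{eq: pi1 to pi01} reads $\Pi(x^\T\beta)=\Pi_0(x^\T\beta_0)$ for all $x\in\mathbb{X}$.

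The crux, and the step I expect to be the main obstacle, is to promote this identity on $\mathbb{X}$ to one on all of $\R^r$. The two sides, $x\mapsto\Pi(x^\T\beta)$ and $x\mapsto\Pi_0(x^\T\beta_0)$, are real-analytic on $\R^r$ and agree on $\mathbb{X}$. I would use that $\mathbb{X}$ is connected and, by Condition~\ref{Cond: Reg Full Rank}, of full rank, to conclude that $\mathbb{X}$ carries a nondegenerate open piece on which the two functions coincide; the identity theorem for real-analytic functions on the connected domain $\R^r$ then gives
\begin{equation*}
\Pi\left(x^\T\beta\right)=\Pi_0\left(x^\T\beta_0\right),\qquad\forall\,x\in\R^r.
\end{equation*}
This is immediate when $\mathbb{X}$ is full-dimensional, for instance for the stochastic regressors of Condition~\ref{Cond: Reg Trend Type}(b) with $\mathrm{Cov}(X)>0$, whose support is open. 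For lower-dimensional connected $\mathbb{X}$ one cannot appeal directly to analyticity and must instead exploit the affine structure of $x\mapsto x^\T\beta$ and $x\mapsto x^\T\beta_0$: this is where connectedness does the real work, and careful treatment of the intercept coordinate, which is pinned to unity, is needed.

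Granting the identity on $\R^r$, the conclusion follows quickly. If $\beta$ and $\beta_0$ were linearly independent, I would choose a direction $e$ with $e^\T\beta_0=0$ but $e^\T\beta\neq0$; moving along $x+te$ keeps the right-hand side constant while the left-hand side $\Pi(x^\T\beta+t\,e^\T\beta)$ is strictly monotone in $t$, a contradiction. Hence $\beta=\lambda\beta_0$ for some $\lambda>0$ (positivity forced by monotonicity), and the identity collapses to $\Pi(\lambda v)=\Pi_0(v)$ for all $v\in\R$.

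Finally I would rule out $\Pi(\lambda v)=\Pi_0(v)$ with $\tau\neq\tau_0$ by a characteristic-function argument. Since $\Pi_0$ is the distribution function of $L-\sqrt{\tau_0}\,z$ and $\Pi$ that of $L-\sqrt{\tau}\,z$, where $L$ is standard logistic (its distribution function being exactly $\dot b$) independent of $z\sim N(0,1)$, the identity says $(L-\sqrt{\tau}\,z)/\lambda$ and $L-\sqrt{\tau_0}\,z$ have the same law, hence the same characteristic function $\tfrac{\pi s}{\sinh(\pi s)}\,e^{-\tau_0 s^2/2}$. Comparing the logistic factor, whose poles sit at $s\in i\mathbb{Z}$, pins down the scale $\lambda=1$, after which the Gaussian factors force $\tau=\tau_0$, contradicting $\tau\neq\tau_0$. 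Therefore no such $\delta\neq\delta_0$ exists and $Q(\delta)$ has its unique maximum at $\delta_0$, establishing Condition~\ref{Cond: Ident and Const}.
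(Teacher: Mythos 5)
Your endgame coincides with the paper's: once the problem is reduced to the one--dimensional identity $\Pi(\lambda v)=\Pi_0(v)$, both arguments identify the two sides as (distribution functions of) a logistic variable convolved with an independent Gaussian and compare Fourier transforms, using the factor $\pi s/\sinh(\pi s)$ to pin down the scale and the Gaussian factor to force $\tau=\tau_0$. Your version of this step, phrased via characteristic functions and the poles of $1/\sinh$, is if anything cleaner than the paper's.

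The genuine gap is in the reduction, exactly where you flag it. Your route is to extend $\Pi(x^\T\beta)=\Pi_0(x^\T\beta_0)$ from $\mathbb{X}$ to all of $\R^r$ by real-analyticity and then run a directional argument to get $\beta=\lambda\beta_0$. But the identity theorem needs the two functions to agree on an open subset of $\R^r$, and $\mathbb{X}$ is never full-dimensional here: the first coordinate of every regressor is pinned to unity, so $\mathbb{X}$ lies in the affine hyperplane $\{x_1=1\}$ and has empty interior in $\R^r$. Your subsequent step of choosing $e$ with $e^\T\beta_0=0$, $e^\T\beta\neq0$ and sliding along $x+te$ also presupposes the extended identity, so the whole chain rests on the unproved extension. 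The paper's proof avoids this entirely by differentiating \eqref{eq: pi1 to pi01} with respect to $x$ on the connected set $\mathbb{X}$, which yields the gradient identity
\begin{equation*}
\Bigl(\int \ddot{b}(x^\T\beta_0+\sigma_0 z)\phi(z)\,dz\Bigr)\beta_0=\Bigl(\int \ddot{b}(x^\T\beta+\sigma z)\phi(z)\,dz\Bigr)\beta ;
\end{equation*}
since both integrals are strictly positive scalars, this forces $\beta=c_1\beta_0$ with $c_1>0$ at a stroke, and setting $\eta=x^\T\beta$ immediately gives the one-dimensional convolution identity on the (connected, hence interval-valued) range of $\eta$. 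That differentiation step is the idea your proposal is missing; with it in place, the remainder of your argument goes through and reproduces the paper's conclusion.
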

\noindent{\small Proof: see the appendix A}.

\subsection{Estimation of the Covariance matrix}\label{SSc: CovMat Est}

To use Theorem \ref{Thm: Consist&Asym Marginal Like Estimator} the asymptotic covariance matrix $\Omega_{1,1}^{-1}\Omega_{1,2}\Omega_{1,1}^{-1}$ needs to be estimated using a single observed time series. Now $\Omega_{1,1}$ can be estimated by replacing $\delta_0$ with the marginal likelihood estimates $\hat\delta$. However estimation of $\Omega_{1,2}$ is challenging, as $\Omega_{1,2} = n^{-1}E\left[\sum_{t=1}^n \sum_{s=1}^n \dot{l}_t(\delta_0)\dot{l}_s(\delta_0)\right]$ has cross terms $E\left(\dot{l}_t(\delta_0) \dot{l}_s(\delta_0)\right)$, $s\ne t$, which cannot be estimated without knowledge of $\psi_0$. We use the modified subsampling methods reviewed in \cite{wu2012variance} and \cite{wu2014parameter} to estimate $\Omega_{1,2}$.

Let $Y_{i,k_n}= (y_i,\ldots,y_{i+k_n-1})$ denote the subseries of length $k_n$ starting at the $i$th observation, where $i=1,\ldots, m_n$ and $m_n=n-k_n+1$ is the total number of subseries. Define
\[
\hat{q}_{n,t}= \frac{1}{\sqrt{n}}\dot{l}_{t}(\hat\delta)
\]
by replacing $\delta_0$ by $\hat\delta$ in \eqref{eq: deriv1 loglik t}. Under similar conditions to those given above, we show that as $k_n\to\infty$ and $m_n\to\infty$, $\hat\Gamma_{1,n}^{-1}\hat\Gamma^\dag_n\hat\Gamma_{1,n}^{-1}$ is a consistent estimator of the asymptotic covariance matrix of $\hat\delta$, where
\[
\hat\Gamma_{1,n} = \sum_{t=1}^n \hat{q}_{n,t}\hat{q}_{n,t}^{\T}; \quad
\hat\Gamma^\dag_n = \frac{1}{m_n}\sum_{i=1}^{m_n}\left(\sum_{t=i}^{i+k_n-1} \sum_{s=i}^{i+k_n-1}\hat{q}_{k_n,t}\hat{q}_{k_n,s}^{\T}\right)
\]
The performance of subsampling estimators relies on $k_{n}$ to a large extent. Following the guidance of \cite{heagerty2000window} on optimal selection of $k_{n}$, we use $k_{n}=C[n^{1/3}]$, $C=1,2,4,8$ in the simulations. The one dimensional integrals in $\hat{q}_{n,t}$ can be easily obtained using the \textbf{R} function \textsf{integrate}.

\section{Degeneration of Marginal Likelihood Estimates}\label{Sec: Pile up probability}%

Even when the identifiability conditions are satisfied, in finite samples the marginal likelihood can be maximised at $\hat\tau=0$, in which case $\hat\beta$ degenerates to the ordinary GLM estimate $\tilde\beta$. Simulation evidence of Section \ref{Sec: Simulations} suggests that the chance of this occurring, even for moderate to large sample sizes, is large (up to $50$\% for binary data but decreasing rapidly as the number of trials $m$ increases). In this section we will derive two approximations for this probability. In both approximations we conclude that $P(\hat\tau=0)$ will be high whenever the range of $x_{nt}^{\T}\beta$ is such that $\dot{b}(x_{nt}^{\T}\beta)\approx a_0 + a_1(x_{nt}^{\T}\beta)$, where $a_{0}, a_{1}$ are constants. When this linear approximation is accurate the covariance matrix for the marginal likelihood estimates is obtained from the inverse of a near singular matrix and results in $\textrm{var}(\hat \tau)$ being very large so that $P(\hat\tau=0)$ is close to $50$\%. When there is a nontrivial probability of $\hat\tau=0$, the distribution of $\hat\beta$ for finite samples is better approximated by a mixture of two multivariate distributions weighted by $P(\hat\tau=0)$ and $P(\hat\tau >0)$.

\subsection{Estimating the probability of $\hat\tau=0$}

One approximation for the probability of $\hat\tau=0$ can be obtained using the asymptotic normal distribution provided in Theorem \ref{Thm: Consist&Asym Marginal Like Estimator}. Define
$\kappa_{2}= P(\sqrt{n}(\hat\tau-\tau_0)\le -\sqrt{n}\tau_0)$, then in the limit,
\begin{equation}\label{eq: kappa2}%
\bar\kappa_{2}= \Phi( -\sqrt{n}\tau_0/\sigma_{\tau}(\delta_0)); \quad  \sigma_{\tau}^2(\delta_0) = (\Omega_{1,1}^{-1}\Omega_{1,2}\Omega_{1,1}^{-1})_{\tau\tau}
\end{equation}
where $\Phi(\cdot)$ is the standard normal distribution.

An alternative approximation to $P(\hat \tau=0)$ can be based on the score function evaluated at $\hat\tau=0$. Consider the scaled score function $S_{1,n}(\tilde\beta) = 2 n^{-1}\partial l_{1}(\beta, \tau)/\partial \tau|_{\beta=\tilde\beta,\tau=0}$, which, using integration by parts, is
\begin{equation}\label{eq: ST GLM}%
S_{1,n}(\tilde\beta)= \frac{1}{n}\sum_{t=1}^{n}\left[(y_{t}- m_t\dot{b}(x_{nt}^{\T} \tilde\beta))^{2} - m_t \ddot{b}(x_{nt}^{\T}\tilde\beta)\right].
\end{equation}
Now, $\hat\tau=0$ implies $S_{1,n}(\tilde\beta)\le 0$ but not the converse, hence $P(\hat\tau=0)$ is bounded above by $P(S_{1,n}(\tilde\beta)\le 0)$.

In order to derive a large sample approximation to this probability we show, in Section \ref{SSc: Asymptotic Theory GLM Estimation}, that the large sample distribution of $\sqrt{n}(S_{1,n}(\tilde\beta)-c_S)/\sigma_S$ is standard normal,
where $c_S= \underset{n\to \infty} \lim E(S_{1,n}(\beta^\prime))$ and $\sigma_{S}^2= \underset{n\to\infty}\lim \mathrm{Var}(\sqrt{n}S_{1,n}(\tilde\beta))$. Define $\kappa_{1} = P(S_{1,n}(\tilde\beta)\le 0)$, it can then be approximated with
\begin{equation}\label{eq: kappa 1}%
\bar\kappa_{1}= \Phi( -\sqrt{n}c_S/\sigma_S).
\end{equation}
The quantities $c_S$ and $\sigma_S$ can be expressed analytically for some regression specifications. In simulations, the limits are computed using numerical integration. For the binary case in particular, the ratio $c_S/\sigma_S$ can be quite small resulting in a large value for $\bar \kappa_1$. We compare how well $P(\hat \tau =0)$ is estimated by $\bar\kappa_{1}$ and $\bar\kappa_{2}$ via simulations in Section
\ref{Sec: Simulations}, and conclude that $\bar \kappa_{1}$ is slightly more accurate in the situation covered there.

\subsection{Asymptotic Theory for GLM Estimates and Marginal Score}
\label{SSc: Asymptotic Theory GLM Estimation}

To develop the asymptotic distribution of $S_{1,n}(\tilde\beta)$, the asymptotic normality of $\sqrt{n}(\tilde{\beta}-\beta')$ is required.

\begin{thm}[Asymptotic normality of GLM\ estimators]\label{Thm: GLM asymptotics}%

Under Conditions \ref{Cond: mt} to \ref{Cond: Mixing}, the estimates $\tilde\beta$ maximising the likelihood \eqref{eq: loglikglm} satisfies $\tilde\beta\overset{p}\to \beta'$, and $\sqrt{n}(\tilde\beta -\beta') \to \textrm{N}(0, \Omega_{1}^{-1} \Omega_{2}\Omega_{1}^{-1})$ as
$n \to \infty$, in which
\[
\Omega_{1} = \underset{n\rightarrow\infty}\lim \frac{1}{n}\sum_{t=1}^{n}m_{t}\ddot{b}
(x_{nt}^{T}\beta^{\prime})x_{nt}x_{nt}^{\T}
\]
\begin{align*}
\Omega_{2} =& \underset{n\rightarrow \infty}\lim \frac{1}{n}\sum_{t=1}^{n}\sum_{s=1}^n m_t m_s \left( \int (\dot{b}(x_{nt}^{\T}\beta_0 + \alpha_t) - \dot{b}(x_{nt}^{\T}\beta')) (\dot{b}(x_{ns}^{\T}\beta_0+ \alpha_s)- \dot{b}(x_{ns}^{\T}\beta')
)g(\alpha_t,\alpha_s;\tau_0,\psi_0) d\alpha\right) x_{nt}x_{ns}^{\T}\\
+ & \underset{n\to \infty}\lim \frac{1}{n}\sum_{t=1}^n m_t\left(\int \ddot{b}(x_{nt}^{\T}\beta_0 + \alpha_t)g(\alpha_t;\tau_0)d\alpha \right)x_{nt}x_{nt}^{\T}
\end{align*}
\end{thm}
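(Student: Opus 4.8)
The plan is to treat $\tilde\beta$ as an M-estimator for a \emph{misspecified} model and follow the classical two-stage route: first establish consistency to the pseudo-true value $\beta'$, then obtain asymptotic normality from a Taylor expansion of the score about $\beta'$. Write the GLM score as $\dot l_0(\beta)=\sum_{t=1}^n U_t(\beta)$ with $U_t(\beta)=(y_t-m_t\dot b(x_{nt}^\T\beta))x_{nt}$, and note that $l_0(\beta)$ is concave in $\beta$ because $b$ is convex, with Hessian $\ddot l_0(\beta)=-\sum_t m_t\ddot b(x_{nt}^\T\beta)x_{nt}x_{nt}^\T$. All the relevant nonlinearities are bounded, $0\le\dot b\le1$, $0\le\ddot b\le 1/4$ and $0\le y_t\le m_t\le M$, which keeps the moment bookkeeping simple and lets the exponential-moment part of Condition 2b do work only on the regressor factors $x_{nt}$.

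For consistency I would show $n^{-1}l_0(\beta)\to Q(\beta)$ uniformly on the compact parameter set. Pointwise convergence follows from a law of large numbers for the strongly mixing array $\{y_t-m_tb(x_{nt}^\T\beta)\}$ (summable mixing by Conditions 1 and 4), and since each $n^{-1}l_0(\beta)$ is concave, pointwise convergence on a dense set upgrades to uniform convergence on compacta by the standard convexity argument. The limit $Q(\beta)$ is strictly concave with a unique maximizer characterised by the estimating equation \eqref{eq: betaprime equation for 2a}, which by construction is $\beta'$, and Condition \ref{Cond: Reg Full Rank} guarantees the associated second-moment matrix is nonsingular so that the maximizer is unique. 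An argmax-continuity (standard M-estimation) argument then yields $\tilde\beta\overset{p}\to\beta'$.

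For asymptotic normality I would expand $0=\dot l_0(\tilde\beta)=\dot l_0(\beta')+\ddot l_0(\bar\beta)(\tilde\beta-\beta')$ with $\bar\beta$ on the segment joining $\tilde\beta$ and $\beta'$, so that
\[
\sqrt n(\tilde\beta-\beta')=-\Big(\tfrac1n\ddot l_0(\bar\beta)\Big)^{-1}\tfrac1{\sqrt n}\dot l_0(\beta').
\]
The Hessian converges, $n^{-1}\ddot l_0(\beta)\to-\Omega_1$ uniformly in a neighbourhood of $\beta'$ (LLN plus equicontinuity from bounded $\ddot b$), and since $\bar\beta\overset{p}\to\beta'$ this gives $n^{-1}\ddot l_0(\bar\beta)\to-\Omega_1$ with $\Omega_1>0$ by Condition \ref{Cond: Reg Full Rank}. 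Two points then need care for the score $n^{-1/2}\dot l_0(\beta')=n^{-1/2}\sum_t U_t(\beta')$. First the recentring: its mean $\sum_t E\,U_t(\beta')=\bar m\sum_t(\overline{\dot b}_t-\dot b(x_{nt}^\T\beta'))x_{nt}$, where $\overline{\dot b}_t=\int\dot b(x_{nt}^\T\beta_0+\alpha)g(\alpha;\tau_0)d\alpha$ and I used independence of $\{m_t\}$ from $(\alpha_t,x_t)$, is of Riemann-sum order $O(1)$ under Condition 2a and exactly zero under the stationary Condition 2b by the defining equation \eqref{eq: betaprime equation for 2a}; either way it is $o(\sqrt n)$, while the fluctuations of $\{m_t\}$ about $\bar m$ feed only into the variance. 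Second the variance: conditioning on the latent process and the trials and invoking $E(y_t\mid\alpha,m)=m_t\dot b(x_{nt}^\T\beta_0+\alpha_t)$ and $\Var(y_t\mid\alpha,m)=m_t\ddot b(x_{nt}^\T\beta_0+\alpha_t)$, the law of total covariance sends the $t\ne s$ terms to the double-sum integral against the bivariate density $g(\alpha_t,\alpha_s;\tau_0,\psi_0)$ and the diagonal sampling variance to the single-sum $\int\ddot b\,g$ term, so that $n^{-1}\sum_{t,s}\Cov(U_t(\beta'),U_s(\beta'))\to\Omega_2$.

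The genuine obstacle is the central limit theorem for the dependent, triangular array $\{U_t(\beta')\}$. The responses are conditionally independent given $\alpha$, but the sequence inherits dependence from the latent process; writing $y_t$ as a measurable instantaneous function of $(\alpha_t,m_t,x_t)$ and an independent sampling innovation shows $\{U_t(\beta')\}$ is strongly mixing with coefficients controlled by those of $\{\alpha_t\}$ (Condition \ref{Cond: Mixing}) and $\{m_t\}$ (Condition 1). The rate $\sum_h\nu(h)^{\lambda/(2+\lambda)}<\infty$ in Condition \ref{Cond: Mixing} is precisely the Ibragimov-type condition pairing with a uniform $(2+\lambda)$ moment bound on $U_t(\beta')$ (finite by boundedness of $y_t,\dot b$ and the exponential moments of Condition 2b) to deliver a mixing CLT. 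I would reduce the vector statement to scalars by the Cram\'er--Wold device and apply a triangular-array mixing CLT via Bernstein big-block/small-block partitioning, mirroring the arguments used for the Poisson and negative binomial cases in \cite{davis2000autocorrelation,davis2009negative} and \cite{wu2014parameter}; the only substantive difference is that here $\beta'\ne\beta_0$, so the analysis is that of a quasi-likelihood centred at the pseudo-true value rather than a consistent GLM. Combining the Hessian limit, the vanishing mean and this CLT through Slutsky's theorem gives the stated sandwich covariance $\Omega_1^{-1}\Omega_2\Omega_1^{-1}$.
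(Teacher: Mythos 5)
Your proof is correct and delivers the right sandwich covariance, but it takes a genuinely different route from the paper. You follow the classical M-estimation recipe: a separate consistency step (uniform convergence of $n^{-1}l_0(\beta)$ to $Q(\beta)$, upgraded from pointwise via concavity) followed by a Taylor expansion of the first-order condition $0=\dot l_0(\tilde\beta)$ about $\beta'$, a uniform LLN for the Hessian at the intermediate point, and a mixing CLT for the score. The paper instead uses the local-quadratic/convexity approach of \cite{davis2000autocorrelation} and \cite{wu2014parameter}: it reparametrizes $u=\sqrt n(\beta-\beta')$, writes $g_n(u)=-l_n(\beta'+u/\sqrt n)+l_n(\beta')=B_n(u)-A_n(u)$ with $B_n(u)\to\tfrac12 u^{\T}\Omega_1 u$ and $A_n(u)=u^{\T}U_n$ linear in the score, and then invokes convexity of $g_n$ plus an argmin continuous-mapping (functional limit) argument to pass directly from pointwise convergence of $g_n$ to $\hat u_n\overset{d}\to\arg\min g\sim N(0,\Omega_1^{-1}\Omega_2\Omega_1^{-1})$. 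The probabilistic inputs are identical in both routes --- the CLT for $U_n=n^{-1/2}\sum_t(y_t-m_t\dot b(x_{nt}^{\T}\beta'))x_{nt}$ under strong mixing (your Cram\'er--Wold plus blocking argument, the paper's appeal to \cite{davidson1992central}) and the limit $\Omega_1$ for the quadratic term --- and your derivation of $\Omega_2$ by the law of total covariance, splitting the latent-process cross terms from the conditional sampling variance, is exactly the computation behind the paper's formula. What the paper's route buys is economy: convexity lets it dispense with both the separate consistency argument and the justification that the Hessian at the intermediate point $\bar\beta$ converges and is invertible. What your route buys is transparency about where each hypothesis enters. One small caution: your claim that the recentering term $\sum_t E\,U_t(\beta')$ is $O(1)$ under Condition 2a rests on a Riemann-sum error bound that needs $h$ to be piecewise smooth rather than merely piecewise continuous; the paper's own proof glosses over the same point, so this is not a gap relative to its standard of rigour.
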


The proof of this theorem is given in Appendix B. It relies on concavity of the GLM log likelihood with respect to $\beta$. Standard results of a functional limit theorem are used to establish the above result, in a similar way as that used in \cite{davis2000autocorrelation}, \cite{davis2009negative} and \cite{wu2014parameter}.

In order to use Theorem \ref{Thm: GLM asymptotics} for practical purposes, first $\beta'$ needs to be determined and then $\Omega_1$, $\Omega_2$. Estimation of $\beta'$ would require knowledge of $\tau$, and the estimation of $\Omega_2$ would require both $\tau$ and $\psi$, neither of which can be estimated using the GLM\ procedure. Hence the theorem is of theoretical value only.

Based on Theorem \ref{Thm: GLM asymptotics} we can now derive the large sample distribution of the score function of the marginal likelihood evaluated at $\tilde \delta= (\tilde  \beta,0)$.
Because all derivatives of $b(\cdot)$ are uniformly bounded and $\tilde\beta\overset{p}\to\beta'$, hence
\begin{equation*}
\sqrt{n}\left(S_{1,n}(\tilde\beta) - E(S_{1,n}(\beta')) \right) = \sqrt{n}\left( S_{1,n}(\beta^\prime) - E(S_{1,n}(\beta^\prime)) \right) - J_{S}^{\T}\sqrt{n}(\tilde\beta - \beta^\prime) + o_p(1).
\end{equation*}
Since $n^{-1/2}\sum_{t=1}^n (y_t-m_{t}\dot{b}(x_{nt}^T\tilde\beta))x_{nt} = 0$ by definition of $\tilde\beta$, using Taylor expansion
\begin{equation*}
\sqrt{n}(\tilde \beta-\beta')=\Omega_{1}^{-1}\frac{1}{\sqrt{n}}\sum_{t=1}^{n} e_{t,\beta'}x_{nt} + o_p(1), \quad e_{t,\beta'}= y_t-m_t\dot{b}(x_{nt}^{\T}\beta').
\end{equation*}
Then it follows that
\begin{equation}\label{eq: ScoreVec tau=0}%
\sqrt{n}\left(S_{1,n}(\tilde\beta))-E(S_{1,n}(\beta^\prime))\right) - \left(U_{1,n} - J_{S}^{\T}U_{2,n}\right) \overset{p}\to 0
\end{equation}
where
\begin{equation}\label{eq: Js}%
J_{S} = \underset{n\to\infty}\lim \frac{1}{n}\sum_{t=1}^n \left[2 m_t^2 (\pi^0(x_{nt}^{\T} \beta_0) - \dot{b}(x_{nt}^T\beta')) \ddot{b}(x_{nt}^T\beta^\prime)+ m_t b^{(3)}(x_{nt}^T \beta^\prime)\right] x_{nt}
\end{equation}
\begin{equation}\label{eq: U1 U2}%
U_{1,n}:= \sqrt{n}\left(S_{1,n}(\beta^\prime) - E(S_{1,n}(\beta^\prime)) \right)= \frac{1}{\sqrt{n}} \sum_{t=1}^{n} e_{t,\beta^\prime} ^2 - E e_{t,\beta'}^2; \quad
U_{2,n}:= \frac{1}{\sqrt{n}}\sum_{t=1}^{n} e_{t,\beta^\prime}c_{nt}
\end{equation}
note $\pi^0(x_{nt}^{\T}\beta_0) = \int \dot{b}(x_{nt}^{\T}\beta_0+\alpha_t)g(\alpha_t,\tau_0)d\alpha$ and $c_{nt} = \Omega_{1}^{-1}x_{nt}$, which is a non-random vector.

Then the CLT for $\sqrt{n}\left(S_{1,n}(\tilde\beta)- E(S_{1,n}(\beta'))\right)$ follows the CLT for the joint vector of $(U_{1,n}, U_{2,n})$. Note that both sequences of $\{U_{1,t}\}$ and $\{U_{2,t}\}$ are strongly mixing by Proposition 1 in \cite{blais2000limit}. Then the CLT for mixing process proposed in \cite{davidson1992central} can be applied to show that $(U_{1,n}, U_{2,n})$ is asymptotically normally distributed with mean zero and covariance matrix
\[
\begin{pmatrix}
V_S & \Omega_{1}^{-1}K_{S}\\
K_{S}^{\T}\Omega_{1}^{-1} &   \Omega_{1}^{-1}\Omega_{2}\Omega_{1}^{-1}
\end{pmatrix}
\]
where $\Omega_{1}, \Omega_{2}$ are given in Theorem \ref{Thm: GLM asymptotics}, and
\[
V_S:=\underset{n\to\infty}\lim \left\{ \sum_{h=0}^{(n-1)}\left(\frac{1}{n} \sum_{t=1}^{n}\textrm{Cov}(e_{t,\beta^\prime}^2, e_{t+h, \beta^\prime}^2)\right)
+ \sum_{h=1}^{(n-1)}\left(\frac{1}{n} \sum_{t=h+1}^{n}\textrm{Cov}(e_{t,\beta^\prime}^2,
e_{t-h, \beta^\prime}^2)\right)\right\}
\]
\[
K_{S}: = \underset{n\to\infty}\lim \left\{ \sum_{h=0}^{(n-1)}\left(\frac{1}{n}\sum_{t=1}^{n}
E (e_{t,\beta'} e_{t+h,\beta'}^{2}) x_{nt}\right) + \sum_{h=1}^{(n-1)}\left(\frac{1}{n} \sum_{t=h+1}^{n}E (e_{t,\beta'} e_{t-h,\beta'}^{2}) x_{nt}\right)\right\}
\]

\begin{thm} \label{Thm: score dist}
Under the assumptions of Theorem \ref{Thm: GLM asymptotics}, as $n\to \infty$,
$\sqrt{n}\left(S_{1,n}(\tilde\beta)- E(S_{1,n}(\beta'))\right)/\sigma_{S}\overset{d}\to N(0, 1)$.
\end{thm}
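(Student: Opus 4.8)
The plan is to reduce the statement to a joint central limit theorem for the pair $(U_{1,n}, U_{2,n})$ defined in \eqref{eq: U1 U2}, and then to read off the limit for the scalar score as a fixed linear functional of that pair. The entry point is the asymptotic linearisation \eqref{eq: ScoreVec tau=0}, which already gives
\[
\sqrt{n}\bigl(S_{1,n}(\tilde\beta) - E(S_{1,n}(\beta'))\bigr) = U_{1,n} - J_S^{\T} U_{2,n} + o_p(1),
\]
with $J_S$ as in \eqref{eq: Js}. Hence, once $(U_{1,n}, U_{2,n})$ is shown to converge jointly to a mean-zero Gaussian vector, the Cram\'er--Wold device applied with the fixed contrast $(1, -J_S)$, together with Slutsky's theorem, delivers asymptotic normality of the left-hand side, and the $o_p(1)$ remainder is negligible after scaling by the constant $\sigma_S$.

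First I would establish the joint CLT. Each summand $e_{t,\beta'}^2 - E e_{t,\beta'}^2$ and $e_{t,\beta'}c_{nt}$ is a measurable function of finitely many coordinates of the composite process $(y_t, x_t, m_t, \alpha_t)$, with $c_{nt}=\Omega_1^{-1}x_{nt}$ a fixed linear image of the regressor. Because $\{\alpha_t\}$ (Condition \ref{Cond: Mixing}) and $\{m_t\}$ (Condition \ref{Cond: mt}) are strongly mixing, the regressors are either deterministic (Condition 2a) or a stationary sequence (Condition 2b), and $y_t$ is conditionally independent given $(\alpha_t, x_t, m_t)$, the composite process is strongly mixing; by Proposition 1 of \cite{blais2000limit} the derived sequences $\{U_{1,t}\}$ and $\{U_{2,t}\}$ inherit this. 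I would then check the hypotheses of the \cite{davidson1992central} CLT for mixing arrays: since $0 \le y_t \le m_t \le M$ and $\dot{b}\in(0,1)$, the errors $e_{t,\beta'}$ are uniformly bounded, so $\{U_{1,t}\}$ is bounded, while $c_{nt}$ is a bounded deterministic sequence in case 2a and has all polynomial moments under the exponential-moment condition of 2b, giving in either reading $\sup_t E|U_{2,t}|^{2+\lambda}<\infty$. Combined with the rate summability $\sum_h \nu(h)^{\lambda/(2+\lambda)}<\infty$ of Condition \ref{Cond: Mixing}, this yields joint asymptotic normality with the stated block covariance matrix with entries $V_S$, $\Omega_1^{-1}K_S$ and $\Omega_1^{-1}\Omega_2\Omega_1^{-1}$.

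It then remains to identify the variance and conclude. Evaluating the variance of the contrast $U_{1,n} - J_S^{\T}U_{2,n}$ against the $2\times 2$ block covariance gives
\[
\sigma_S^2 = V_S - 2 J_S^{\T}\Omega_1^{-1}K_S + J_S^{\T}\Omega_1^{-1}\Omega_2\Omega_1^{-1}J_S,
\]
so that $\sqrt{n}(S_{1,n}(\tilde\beta) - E(S_{1,n}(\beta')))/\sigma_S \overset{d}\to N(0,1)$ whenever $\sigma_S^2>0$.

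The main obstacle I anticipate is not the CLT machinery itself but the bookkeeping that makes the reduction exact. In particular, $U_{2,n}$ must have asymptotic mean zero, which requires $n^{-1/2}\sum_{t=1}^n E(e_{t,\beta'})x_{nt}\to 0$: this is the first-order condition \eqref{eq: betaprime equation for 2a} defining $\beta'$, but it must now hold at rate $o(n^{-1/2})$ rather than merely in the Ces\`aro limit, so under Condition 2a a bounded-variation/Riemann-sum estimate controlling the discretisation error of $h(\cdot)$ is needed. Secondly, the linearisation \eqref{eq: ScoreVec tau=0} must be justified uniformly near $\beta'$, using consistency $\tilde\beta\overset{p}\to\beta'$ from Theorem \ref{Thm: GLM asymptotics}, the linear representation of $\sqrt{n}(\tilde\beta-\beta')$, and a Taylor remainder controlled by the uniform bound on $b^{(3)}$. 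Finally, one must confirm strong mixing of the composite process and the moment and rate conditions in the stochastic-regressor case, which is where the exponential-moment assumption of Condition 2b does the real work.
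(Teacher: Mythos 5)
Your proof follows essentially the same route as the paper's: both reduce the statement via the linearisation \eqref{eq: ScoreVec tau=0} to a joint CLT for $(U_{1,n},U_{2,n})$, established by the Cram\'er--Wold device together with strong mixing of the derived sequences (via \cite{blais2000limit}) and the CLT of \cite{davidson1992central}, and then read off $\sigma_S^2 = V_S - 2J_S^{\T}\Omega_1^{-1}K_S + J_S^{\T}\Omega_1^{-1}\Omega_2\Omega_1^{-1}J_S$ from the block covariance. Your closing remarks on the $o(n^{-1/2})$ centering of $U_{2,n}$ and the uniform justification of the linearisation flag genuine details that the paper passes over silently, but they do not alter the argument.
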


\subsection{An approximate mixture distribution for $\hat\beta$} \label{SSc: mixture}%

\begin{thm}[Mixture distribution under finite samples] \label{Thm: Prob mixdensity sigma=0}%
Assume $\tau_{0}>0$, under Conditions \ref{Cond: mt} to \ref{Cond: Ident and Const}, in finite samples, distribution of $\sqrt{n}(\hat\beta-\beta_{0})$ can be approximated with the mixture
\[
\kappa F_{1}(c,\delta_0) + (1-\kappa)F_{2}(c,\delta_0),\quad \kappa=P(\hat\tau=0)
\]
in which $F_{1}(c,\delta_0)$ is $r$-dimensional multivariate distribution obtained through
$\sqrt{n}(\hat\beta-\beta')$, which is a skew normal distribution $U_{2,n}|U_{1,n} + \sqrt{n} E(S_{1,n}(\beta')) - 2 J_{S}^{\T} U_{2,n}\le 0$, based on the joint normality of
$(U_{1,n}, U_{2,n})$ given in Theorem \ref{Thm: score dist}; $F_{2}(c,\delta_0)$ is a $r$-dimensional skew normal distribution $\sqrt{n}(\hat\beta-\beta_{0})|\hat\tau > 0$, based on the joint normality of $N(0,\Omega_{1,1}^{-1}\Omega_{1,2}\Omega_{1,1}^{-1})$ in Theorem
\ref{Thm: Consist&Asym Marginal Like Estimator}. Moreover, $\kappa\to0$ as $n\to\infty$. %
\end{thm}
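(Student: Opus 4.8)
The plan is to realise the mixture as an exact conditioning on whether the boundary is attained, and then to identify the two conditional laws using the asymptotic expansions already established. By the law of total probability, for any $c$,
\[
P\left(\sqrt{n}(\hat\beta-\beta_0)\le c\right)
= \kappa\, P\left(\sqrt{n}(\hat\beta-\beta_0)\le c \mid \hat\tau=0\right)
+ (1-\kappa)\, P\left(\sqrt{n}(\hat\beta-\beta_0)\le c \mid \hat\tau>0\right),
\]
with $\kappa=P(\hat\tau=0)$. This decomposition is exact; the substance of the theorem is to identify the two conditional distributions with the skew-normal laws $F_1$ and $F_2$ and to show $\kappa\to0$.

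For the first component I would use that on the event $\{\hat\tau=0\}$ the marginal likelihood is maximised on the boundary, so $\hat\beta=\tilde\beta$, the GLM estimator. The Taylor expansion preceding Theorem \ref{Thm: score dist} gives $\sqrt{n}(\tilde\beta-\beta')=\Omega_{1}^{-1}n^{-1/2}\sum_{t=1}^n e_{t,\beta'}x_{nt}+o_p(1)=U_{2,n}+o_p(1)$, since $U_{2,n}$ of \eqref{eq: U1 U2} is exactly $\Omega_{1}^{-1}n^{-1/2}\sum_t e_{t,\beta'}x_{nt}$. Thus, up to $o_p(1)$, the centred GLM estimator coincides with the jointly normal vector $U_{2,n}$. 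The event $\{\hat\tau=0\}$ is contained in the boundary score event $\{S_{1,n}(\tilde\beta)\le0\}$, and by \eqref{eq: ScoreVec tau=0} the scaled score $\sqrt{n}S_{1,n}(\tilde\beta)$ is, up to $o_p(1)$, a linear functional of the jointly normal pair $(U_{1,n},U_{2,n})$. Hence $\{\hat\tau=0\}$ is approximated by the half-space in $(U_{1,n},U_{2,n})$ recorded in the definition of $F_1$. Conditioning a jointly Gaussian vector on such a linear inequality yields, through the hidden-truncation (Azzalini) construction, a multivariate skew-normal law; after recentring by the deterministic offset $\sqrt{n}(\beta'-\beta_0)$ this is $F_1(c,\delta_0)$.

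For the second component I would invoke Theorem \ref{Thm: Consist&Asym Marginal Like Estimator}: on $\{\hat\tau>0\}$ the maximiser is interior and $\sqrt{n}(\hat\delta-\delta_0)$ is asymptotically $N(0,\Omega_{1,1}^{-1}\Omega_{1,2}\Omega_{1,1}^{-1})$. Conditioning this joint normal limit on its $\tau$-coordinate being positive, i.e.\ on $\{\sqrt{n}(\hat\tau-\tau_0)>-\sqrt{n}\tau_0\}$, produces a skew-normal law for the $\beta$-coordinates by the same hidden-truncation argument, giving $F_2(c,\delta_0)$. Finally, $\kappa\to0$ follows directly from consistency: since $\hat\delta\overset{\textrm{a.s.}}\to\delta_0$ with $\tau_0>0$, we have $\hat\tau\overset{\textrm{a.s.}}\to\tau_0>0$, whence $P(\hat\tau=0)\to0$. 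Equivalently, the upper bound $\kappa\le P(S_{1,n}(\tilde\beta)\le0)\approx\Phi(-\sqrt{n}\,c_S/\sigma_S)$ of \eqref{eq: kappa 1} tends to $0$ because $c_S>0$ when $\tau_0>0$, which reflects the excess dispersion induced by the latent process.

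The main obstacle is the rigour of the first component. The difficulty is twofold: $\{\hat\tau=0\}$ is only contained in, not equal to, the score event $\{S_{1,n}(\tilde\beta)\le0\}$, so the conditional law given $\{\hat\tau=0\}$ is only approximated by the skew-normal obtained from the half-space truncation; and the linearisations of $\tilde\beta$ and of $S_{1,n}(\tilde\beta)$ hold marginally in $o_p(1)$, but must be coupled so that the truncation constraint and the estimator are evaluated as functionals of the same Gaussian limit $(U_{1,n},U_{2,n})$. I would therefore present the result as a finite-sample approximation, resting rigorously on the exact decomposition above, the joint normality of $(U_{1,n},U_{2,n})$ from Theorem \ref{Thm: score dist}, the normality from Theorem \ref{Thm: Consist&Asym Marginal Like Estimator}, and the clean limit $\kappa\to0$; the skew-normal identification of $F_1$ and $F_2$ is then justified by substituting the leading linear expansions into the defining conditioning events.
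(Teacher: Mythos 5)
Your proposal is correct and follows essentially the same route as the paper: the paper likewise identifies $F_2$ by conditioning the joint normal limit of Theorem \ref{Thm: Consist&Asym Marginal Like Estimator} on $\hat\tau>0$, and identifies $F_1$ by Taylor-expanding the marginal score around $\delta'=(\beta',0)$ and conditioning the resulting joint Gaussian limit (whose components are exactly your $U_{2,n}$ and the $\tau$-score, with mean $(0,E(S_{1,n}(\beta'))/2)$ and covariance built from $\Omega_2$, $K_S$, $V_S$) on the $\tau$-score being negative. Your explicit acknowledgement that $\{\hat\tau=0\}$ is only contained in, not equal to, the score event is the same approximation the paper makes implicitly, and your consistency argument for $\kappa\to0$ matches the paper's reasoning.
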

Remarks
\begin{enumerate}
\item The skew normal distribution is defined in \cite{gupta2004multivariate}.

\item If $\tau_0=0$, $\beta'=\beta_0$ and the value $\kappa=0.5$ in the above mixture is similar to that in \citet[Theorem I]{moran1971maximum}; when $\tau_{0}=a/\sqrt{n}$, $a\ge0$, above results are parallel to those in \citet[Theorem IV]{moran1971maximum} and based on the same reasoning. However Moran's results are for independence observations whereas our results require the serial dependence to be accounted for in the asymptotic results.

\item While the mixture provides a better theoretical description of the asymptotic distribution for marginal likelihood estimates when $m$ is small, in practice, the mixture distribution cannot be estimated without knowing the true values of $\beta_0$, $\tau_0$ and $\psi_0$. In simulations, the covariance matrix for the joint distribution of $\hat\beta$ and $\hat\tau$ is approximated with $\Sigma(\delta_0)=n^{-1}\Omega_{1,1}^{-1}\Omega_{1,2}\Omega_{1,1}^{-1}$, and based on $F_2(c,\delta_0)$, we calculate
\begin{equation}\label{eq: beta cond}%
E(\hat\beta-\beta_0|\hat\tau>0) = \Sigma_{\beta\tau}(\delta_0)\Sigma^{-1}_{\tau\tau}(\delta_0) E(\hat\tau-\tau_0|\hat\tau >0)
\end{equation}
\begin{equation}\label{eq: Vbeta cond}%
\mathrm{Var}(\hat\beta|\hat\tau>0) = \Sigma_{\beta\beta}(\delta_0) - \Sigma_{\beta\tau}(\delta_0) \Sigma^{-1}_{\tau\tau}(\delta_0)\Sigma_{\tau\beta}(\delta_0) + \Sigma_{\beta\tau}(\delta_0) \Sigma^{-2}_{\tau\tau}(\delta_0)\Sigma_{\tau\beta}(\delta_0)\mathrm{Var}(\hat\tau-\tau_0|\hat\tau>0)
\end{equation}
\end{enumerate}

\section{Simulation Results} \label{Sec: Simulations}

In this section we summarize results of several simulation studies to illustrate the key theoretical results derived above as well as to indicate circumstances under which $P(\hat \tau=0)$ is large in which case the mixture distribution of Theorem \ref{Thm: Prob mixdensity sigma=0} would provide a more accurate description.

For all examples we consider the simple linear trend with latent process $W_{0,t} = \beta_1 + \beta_2 (t/n) + \alpha_t$, in which $\alpha_t$ is assumed to be: $\alpha_t=\phi\alpha_{t-1} + \epsilon_t$, $\epsilon_t\overset{i.i.d}\sim N(0,\sigma^2_{\epsilon})$ where $\sigma^2_{\epsilon}$ is chosen to maintain $\textrm{Var}(\alpha_t)=1$. In all cases the true values are $\beta_0 = (1,2)$ and $\tau_0=1$ and $\phi$ varies in the interval $(-1,1)$. While simple, this example provides substantial insights into the behaviour of the marginal likelihood estimates as well as into problems that can arise. The simplicity of this example also allows us to obtain analytical calculations of key quantities and to provide some heuristic explanations of the non-standard distribution results which can arise, particularly for binary time series.

In all simulations reported later, the number of replications was $10,000$. The marginal likelihood estimates were obtained using the \textbf{R} package \textsf{lme4}. The frequency with which $\hat \tau =0$ is not package dependent other than the occasionaly case -- this was checked using our own implementation based on adaptive Gaussian quadrature and by comparing the results with those from SAS PROC MIXED. The first simulation (Section \ref{Sec: Example 1 Sim}) focuses on binary responses and illustrates that the distribution of marginal likelihood estimates $\hat\delta$ for this kind of data converge towards a mixture as proposed in Theorem \ref{Thm: Prob mixdensity sigma=0}, in which the $P(\hat\tau=0)$ can be approximated using the result of Theorem \ref{Thm: score dist} to good accuracy. The second experiment (Section \ref{Sec: Example 2 Sim}) studies the finite sample performance of $\hat\delta$ for binomial cases and shows that $P(\hat\tau=0)$ vanishes as $m_{t}$ increases or as $n\to\infty$, thus the distribution of $\hat\delta$ is multivariate normal as developed in Theorem \ref{Thm: Consist&Asym Marginal Like Estimator}. Finally (Section
\ref{Sec: Example 3 sim}) the method for estimation the covariance matrix for $\hat\delta$ proposed in Section \ref{SSc: CovMat Est}, is evaluated.

In order to implement the simulations in Section \ref{Sec: Analytical Exp Key Asym Quantities} we first derive some theoretical expressions for key quantities used to define the large sample distributions of Theorems \ref{Thm: Consist&Asym Marginal Like Estimator}, \ref{Thm: GLM asymptotics} and \ref{Thm: score dist} as well as for the estimates $\hat \kappa_1$ and $\hat \kappa_2$ for $P(\hat \tau = 0)$.

\subsection{Analytical Expressions for Asymptotic Quantities} \label{Sec: Analytical Exp Key Asym Quantities}

Key quantities required for implementation and explanation of the simulation results to follow are: \newline (1). The limit point $\beta'$ for the GLM estimate $\tilde \beta$.
\newline (2). $c_S$ and $\sigma_S$ appearing in Theorem \ref{Thm: score dist} and used to obtain the approximation $\bar \kappa_1$ for $P(\hat \tau = 0)$.
\newline (3). The asymptotic variance of $\hat \tau$ in Theorem \ref{Thm: Consist&Asym Marginal Like Estimator} used to obtain the approximation $\bar \kappa_2$ for $P(\hat \tau = 0)$.
\newline (4). Various quantities defining the mixture distribution in Theorem \ref{Thm: Prob mixdensity sigma=0}.

Throughout, the derivations are given for the case of deterministic regressors specified as $x_{nt} = h(t/n)$ for a suitably defined vector function $h$ as in Condition \ref{Cond: Reg Trend Type}a and in which the first component is unity in order to include the intercept term. Also, in order to reduce notational clutter we will assume that $m_t\equiv m$ (the number of binomial trials at all time points is the same). The analytical expressions involve various integrals which are computed using numerical integration either with the \textbf{R}-package \textsf{integrate} or using grid evaluation for $u$ with mesh $0.0001$ over the interval $[0,1]$. Calculation of $\Omega_{1,2}$ in Theorem \ref{Thm: Consist&Asym Marginal Like Estimator} and $K_S$, $V_S$ and $\Omega_2$ in Theorem \ref{Thm: score dist} to obtain the variance $\sigma_{S}^2$ for the theoretical upper bound of $P(\hat\tau=0)$ require evaluation of two-dimensional integrals of the form
\begin{equation*}
\mathrm{Cov}(\dot{l}_{t}(\delta_0), \dot{l}_{t+h}(\delta_0)) = \sum_{y_t=0}^{m_{t}} \sum_{y_{t+h}=0}^{m_{t+h}}\pi^0(y_t,y_{t+h})\dot{l}_{t}(\delta_0)\dot{l}_{t+h}(\delta_0).
\end{equation*}
For these, the integral expression for $\pi^0(y_{t}, y_{t+h})$ is approximated using adaptive Gaussian quadratic (AGQ) with 9 nodes for each of the two dimensions.

\subsubsection{Limit Point of GLM Estimation}\label{SSSc: Lim GLM Est}%
By numerically solving the non-linear system \eqref{eq: betaprime equation for 2a} with Newton Raphson iteration the limiting value of the GLM estimates is $\beta'=(0.8206,1.7574)$.

\subsubsection{Quantities needed for $\bar \kappa_1$}
The analytical expression for the limiting expectation of the scaled score with respect to $\tau$ evaluated at the limiting point $\beta'$ is.
\begin{align}\label{eq: Expect Covg}%
c_{S}: =& \underset{n\to \infty} \lim E(S_{1,n}(\beta^\prime)) \nonumber\\
= & \underset{n\to\infty}\lim \frac{1}{n}\sum_{t=1}^n \left[ m(m-1) \int \left(\dot{b}(x_{nt}^{\T}\beta_{0}+\alpha_t) - \dot{b}(x_{nt}^{T}\beta^\prime) \right)^{2}g(\alpha_{t},\tau_{0})d\alpha_t +
m(\dot{b}(x_{nt}^{\T}\beta^\prime) - \pi^0(x_{nt}^{\T}\beta_0))(2\dot{b}(x_{nt}^{\T}\beta^\prime) -1) \right] \nonumber\\
= & m(m-1)\int_{0}^{1}\int \left(\dot{b}(h(u)^{\T}\beta_{0}+\alpha) - \dot{b}(h(u)^{\T}\beta^\prime) \right)^{2}g(\alpha,\tau_{0})d\alpha du\nonumber \\
+ & m\int_{0}^{1}(\dot{b}(h(u)^{\T}\beta^\prime) - \pi^0(h(u)^{\T}\beta_0)) (2\dot{b}(h(u)^{\T}\beta^\prime) -1)du \nonumber\\
= & m(m-1)c_{1}+ mc_{2}.
\end{align}
Note that $c_{1}$ in \eqref{eq: Expect Covg} is strictly positive but make no contribution for binary responses (when $m=1$) in which case $c_{2}$ is the only term contributing to $c_S$. We have observed that $c_2$ is non-negative, in the simulations but we do not have a general proof of that. In that case $c_S$ is also non-negative for all $m$. We have observed in the simulations that $c_1$ is substantially larger than $c_2$ and as a result $\bar \kappa_1$ is small for non-binary responses ($m>1$) but can be large for binary responses because $c_{2}\approx 0$.

Recall that $\sigma_{S}^2= \underset{n\to\infty}\lim \mathrm{Var}(S_{1,n}(\tilde\beta))$. For the case where the latent process is i.i.d. we have
\begin{align}\label{eq: Var Covg}%
\sigma_{S}^2= & \int_{0}^{1} m\pi^0(h(u)^{\T}\beta_0)(1-\pi^0(h(u)^{\T}\beta_0))\left[1+ 2(m-3)\pi^0(h(u)^{\T}\beta_0)(1-\pi^0(h(u)^{\T}\beta_0))\right] \nonumber \\
+ & 4m^3\pi^0(h(u)^{\T}\beta_0)(1-\pi^0(h(u)^{\T}\beta_0))(\pi^0(h(u)^{\T}\beta_0) - \dot{b}(h(u)^T\beta'))^2 \nonumber \\
+ & 4 m^2\pi^0(h(u)^{\T}\beta_0)(1-\pi^0(h(u)^{\T}\beta_0))(1-2\pi^0(h(u)^{\T}\beta_0)) (\pi^0(h(u)^{\T}\beta_0)-\dot{b}(h(u)^{\T}\beta'))du \nonumber \\
- & 2 J_S^T\Omega_{1}^{-1} K_S + J_S^T \Omega_1^{-1}\Omega_{2}\Omega_{1}^{-1} J_S
\end{align}
in which
\begin{align*}
K_S = & \int_{0}^{1} \left[ m\pi^0(h(u)^{\T}\beta_0)(1-\pi^0(h(u)^{\T}\beta_0)) (1-2\pi^0(h(u)^{\T}\beta_0)) \right. \\
+ & \left. 2 m^2\pi^0(h(u)^{\T}\beta_0)(1-\pi^0(h(u)^{\T}\beta_0))(\pi^0(h(u)^{\T}\beta_0) - \dot{b}(h(u)^{\T}\beta'))\right] h(u)du\\
J_S = & \int_{0}^{1} \left[ 2 m^2(\pi^0(h(u)^{\T}\beta_0) - \dot{b}(h(u)^{\T}\beta^\prime)) \ddot{b}(h(u)^{\T}\beta^\prime) + m b^{(3)}(h(u)^{\T}\beta^\prime)\right] h(u)du
\end{align*}
similar expression of $\sigma_{S}^2$ can be obtained for the dependent cases where $\psi \ne 0$. However, serial dependence does not appear to make much difference in the chance of $\hat\tau=0$ at least in the simulations of Section \ref{Sec: Example 2 Sim}.

For the simulation example of the binary response case ($m=1$) the expressions \eqref{eq: Expect Covg}, \eqref{eq: Var Covg} can be evaluated using numerical integration to give
\begin{table}[h]\centering
	\begin{tabular}{l*{4}{c}r}
		\multicolumn{4}{l}{$c_{1}=0.0168$, $c_{2}=1.65\times 10^{-5}$}\\ \hline
		& $c_{S}$ & $\sigma_S$ & $c_{S}/\sigma_{S}$ \\ \hline
		$m=1$ & $1.65\times 10^{-5}$ & $7.17\times 10^{-3}$ & 0.0023 \\
		$m=2$ & 0.034 & 0.303 & 0.1110 \\
		$m=3$ & 0.101& 0.525 & 0.1921\\ \hline
	\end{tabular}
\end{table}

Since $c_{S}$ is observed to be strictly positive for marginal likelihood, $\sqrt{n}c_{S}\to \infty$ as $n\to \infty$. By Theorem \ref{Thm: score dist}, the probability of $S_{1,n}(\tilde\beta)\le 0$ vanishes as $n\to \infty$. Because $P(\hat\tau=0)$ is bounded above by $P(S_{1,n}(\tilde\beta)\le0)$, as $n\to \infty$, the marginal likelihood estimate will be such that $\hat\tau=0$ with vanishing probability. However, notice for binary data $c_S=c_2$ and $c_S/\sigma_S = 0.0023$. Hence, even for the largest sample size ($n=5000$) reported below, $\bar \kappa_1$ is $44\%$. It would require a sample size of $10^6$ to reduce this to $1\%$. Clearly this has substantial implications for the use of marginal likelihood for binary data. For binomial responses the $c_1$ term dominates, and hence even with small values of $m>1$ the chance of $\hat \tau =0$ reduces rapidly.

We conclude this subsection with a heuristic explanation of why $c_2 \approx 0$. If $h(u)^\T \beta'$ appearing in the definition of $c_2$ is such that $\dot b (h(u)^\T \beta')$ is well approximated linearly in $h(u)^\T \beta'$ then $c_2$ could be approximated by
\begin{equation}
c_2^*=\int_0^1 [\dot b(h(u)^{\T} \beta')-\pi^0(h(u)^\T \beta_0)](a_o+a_1 h(u)^\T \beta')du.
\end{equation}
But, because the first element of $h(u)$ is equal to $1$, $a_o+a_1 h(u)^\T \beta'$ can be rewritten as $h(u)^\T \beta^{*}$ for some vector $\beta^*$. Then $c_2^*$ can be written as
\begin{equation}
c_2^*=\int_0^1 [\dot b(h(u)^\T\beta')-\pi^0(h(u)^\T \beta_0)]h(u)^\T du\beta^* = 0
\end{equation}
by definition of $\beta'$ in \eqref{eq: betaprime equation for 2a}. Note that $\dot b(x)$ is the probability of success from a logit response and hence if $x$ ranges over reasonably large values then $\dot b (x)$ may be near linear. In the example used for simulations $h(u) \beta'$ ranges over the interval $(0.8206, 2.578)$ and $\dot b (x)$ for $x$ in this interval is well approximated by a straight line in $x$.

\subsubsection{Asymptotic Covariance matrix for marginal estimates} \label{SSc: asym CovMat}%

Although positive definite, $\Omega_{1,1}$ of the asymptotic covariance matrix in Theorem
\ref{Thm: Consist&Asym Marginal Like Estimator} can be near singular and this results in an overall covariance matrix for $\sqrt{n}(\hat \delta - \delta_0)$ which has very large elements; in particular, the variance of $\hat \delta$ in \eqref{eq: kappa2} is very large and, as a result $\bar \kappa_2$ given in \eqref{eq: kappa2} will also be close to $50\%$. The reason for this will
be analyzed in this section by calculating various components of the asympototic covariance for marginal estimates. To keep the discussion manageable the deterministic regressors, assumed to be generated by functions $h(\cdot)$ and to satisfy Condition \ref{Cond: Reg Trend Type}a, will be used for the derivations. In this case, the summations on the left of \eqref{eq: PD InfMat} has limit given by the integral
\begin{equation}\label{eq: PD InfMat Lim}%
\Omega_{1,1} =  \int_{0}^{1} \sum_{y=0}^{m}f(y|h(u),\delta_0)\dot{l}(y|h(u);\delta_0) \dot{l}^{\T}(y|h(u);\delta_0)du
\end{equation}
in which
\begin{equation*}
\dot{l}(y|h(u);\delta) = f^{-1}(y|h(u),\delta)\begin{pmatrix}\int (y - m \dot{b}(h(u)^{\T}\beta + \sqrt{\tau}z))f(y|h(u),z,\delta)\phi(z)dz \cdot h(u)\\
\int \left[(y- m \dot{b}(h(u)^{\T}\beta +\sqrt{\tau}z))^2-m\ddot{b}(h(u)^{\T}\beta + \sqrt{\tau}z) \right] f(y|h(u),z,\delta)\phi(z)dz /2
\end{pmatrix}
\end{equation*}
where the first dimension corresponds to $\partial l(y|h(u);\delta)/\partial \beta$ and the second dimension corresponds to $\partial l(y|h(u);\delta)/\partial \tau$.

For binary responses, using the facts that $m\equiv 1$ and $y^2=y$, the component in $\partial l(y|h(u);\delta)/\partial \tau$ has
\begin{equation}\label{eq: Trans tau binary}%
(y- m \dot{b}(h(u)^{\T}\beta + \sqrt{\tau}z))^2-m\ddot{b}(h(u)^{\T}\beta + \sqrt{\tau}z) =
(y-\dot{b}(h(u)^{\T}\beta+\sqrt{\tau}z))(1-2\dot{b}(h(u)^{\T}\beta+ \sqrt{\tau}z)).
\end{equation}
Let $W=h(u)^{\T}\beta+\sqrt{\tau}z$, note for binary cases, $(y-\dot{b}(W))f(y|h(u),z,\delta) =
\ddot{b}(W)$ if $y=1$ and $-\ddot{b}(W)$ if $y=0$. Define conditional distribution $\rho(z) = \ddot{b}(W)\phi(z)/\int \ddot{b}(W) \phi(z)dz$, then if $h(u)^{\T}\beta$ appearing in $\dot{l}(y|h(u);\delta)$ is such that the linearly approximation
\begin{equation}\label{eq: d1 tau cond}%
\frac{\partial l(y|h(u);\delta)/\partial \tau}{\int \ddot{b}(W)\phi(z)dz } = \int (1-2\dot{b}(W))\rho(z)dz\approx a_0^\ast + a_1^\ast (h(u)^{\T}\beta) = h(u)^{\T}\beta^\ast
\end{equation}
is well established, then for each fixed $u\in [0,1]$ and $\delta$, $\dot{l}(y|h(u);\delta)$ is a nearly linear dependent vector. Consequently $\Omega_{1,1}$ becomes near singular with a large inverse. In this example $h(u)^{\T}\beta_0$ takes all values in the interval $(1,3)$ over which the left side of
\eqref{eq: d1 tau cond} is approximately a straight line and hence, in view of the above discussion,  and $\Omega_{1,1}$ has eigenvalues $(0.129,6.784\times 10^{-3}, 1.903\times10^{-6})$, with $\sigma_{\tau}^2(\delta_0)=698$.

For binomial data, the properties of $m > 1$ and $y^2 \ne y$ and the left side of \eqref{eq: Trans tau binary} is no longer a near linear function of $h(u)$. As a result $\Omega_{1,1}$ is not nearly singular and $\sigma_{\tau}^2(\delta_0)$ is of moderate size.

\subsubsection{Quantities needed for mixture distribution}
To assess the accuracy of the asymptotic mixture distribution, the theoretical mean vector and covriance for the distributions $F_1(\cdot,\delta_0)$ and $F_2(\cdot,\delta_0)$ are required. For $F_1(\cdot,\delta_0)$ these are approximately as for the normal distribution in Theorem \ref{Thm: GLM asymptotics} for the GLM estimates. The mean is $\beta' = (0.82, 1.76)$ given in Section \ref{SSSc: Lim GLM Est}. The covariance matrix
\begin{equation*}
\Omega_{1}^{-1}\Omega_{2}\Omega_{1}^{-1} = \begin{pmatrix}
23.636 & -39.8212\\
-39.8212 & 98.13 \end{pmatrix}
\end{equation*}
is calculated using $\delta_0=1$ and $\beta'$, in
\begin{equation*}
\Omega_{1}=\int_{0}^{1} \ddot{b}(h^{T}(u)\beta')h(u)h^{T}(u)du
\end{equation*}
and
\begin{equation*}%
\Omega_{2} = \int_{0}^{1} \left[\pi^0(h(u)^{\T}\beta_0)- 2\pi^0(h(u)^{\T}\beta_0) \dot{b}(h(u)^{\T}\beta') + \dot{b}^2(h(u)^{\T}\beta')\right]h(u)h(u)^{\T}du.
\end{equation*}

For $F_{2}(\cdot,\delta_0)$, the mean is evaluated with conditinoal mean \eqref{eq: beta cond} and covariance matrix \eqref{eq: Vbeta cond}. In this example $\Sigma(\delta_0)= n^{-1}\Omega_{1,1}^{-1}$, where $\Omega_{1,1}$ is provided above. For binary data, $\mathrm{E}(\hat\tau-\tau_0|\hat\tau>0)$ and $\mathrm{Var}(\hat\tau-\tau_0|\hat\tau>0)$ are obtained empirically using $\hat\Omega_{1,1}$ as follows:
\begin{align*}
\hat\Omega_{1,1}= & \frac{1}{n}\sum_{t=1}^n\left\{ \dot{l}_t(\delta^\ast)\dot{l}_t^T(\delta^\ast) + f(y_t|x_{nt};\delta^\ast)^{-1}\int (y_t-m_t\dot{b}(W_t^\ast))
\begin{pmatrix} 0 & 0\\
0 & \frac{1}{4\tau^\ast}
\end{pmatrix}
f(y_t|x_{nt},z_t,\delta^\ast)\phi(z)dz \right. \\
- & \left. f(y_t|x_{nt};\delta^\ast)^{-1}\int \left[ (y_t-m_t\dot{b}(W_t^\ast))^2 - m_t\ddot{b}(W_t^\ast)\right]\binom{x_{nt}}{\frac{z_t}{2\sqrt{\tau^\ast}}}
(x_{nt}^T,\frac{z_t}{2\sqrt{\tau^\ast}})f(y_t|x_{nt},z_t,\delta^\ast)\phi(z)dz \right\}
\end{align*}
where $\Vert \delta^\ast -\hat\delta\Vert\le \Vert\hat\delta-\delta_0\Vert$ but under finite samples $\delta^\ast\ne \delta_0$. Note that although for binary data, $\dot{l}_{t}(\delta)$ can be almost linearly dependent as analyzed above, the last part in $\hat\Omega_{1,1}$ is nontrivial and nonsingular for $\delta^\ast\ne \delta_{0}$, which allows $\hat\Omega_{1,1}$ to be nonsingular. As a result a large difference between theoretical and empirical covariance is observed for binary data, which will be shown in Example 2 of simulations.

\subsection{Example 1: binary data, independent latent process} \label{Sec: Example 1 Sim}

This example considers the simplest case of independent observations obtained when $\tau_0=1$ and $\phi_0=0$. For each replication, an independent binary sequence of $Y_t|\alpha_t,x_{nt}\sim B(1,p_t)$, $p_t=1/(1+\exp(-W_{0,t}))$, is generated. Table \ref{tb: conv mixture coef} reports the empirical values $\hat\kappa_{1}$ (the empirical proportion of $S_{1,n}(\tilde\beta)\le 10^{-6}$) and  $\hat\kappa_{2}$ (the empirical proportion of $\hat\tau\le 10^{-6}$). Also shown are the empirical mean and standard deviation (in parentheses) of $\hat\beta$ conditional on $\hat\tau=0$ and $\hat\tau>0$ obtained from the simulations along with the theoretical values of these obtained from \eqref{eq: kappa 1}, \eqref{eq: kappa2}, and \eqref{eq: beta cond}, \eqref{eq: Vbeta cond} associated with Theorem \ref{Thm: Prob mixdensity sigma=0}.

Table \ref{tb: conv mixture coef} clearly demonstrates that for this data generating mechanism there is very high proportion of replicates for which $\hat \tau = 0$ and that this proportion does not decrease rapidly with sample size increasing. This is as predicted by theory and the theoretical values of $\bar \kappa_1$ and $\bar \kappa_2$ provide good approximations to $\hat \kappa_1$ and $\hat \kappa_2$. As explained above, this high proportion of zero estimates for $\tau$, even for large sample sizes, is as expected for the regression structure used in this simulation. Note that $\bar\kappa_{1}\le \bar\kappa_{2}$ and $\bar\kappa_{1}$ is closer to the probability of $\hat\tau=0$. The estimations reverse the theoretical property that $\kappa_{1}\ge \kappa_{2}$. For binary data, $P(\hat\tau=0)$ and both theoretical approximations, $\bar{\kappa}_1$ and $\bar{\kappa}_2$ decrease slowly and a very large sample is required to attain $P(\hat\tau=0)\approx 0$.

It is also clear from Table \ref{tb: conv mixture coef} that the empirical mean and standard deviation of $\hat\beta|\hat\tau=0$ and $\hat\beta|\hat\tau>0$ show good agreement with the theoretical results predicted by Theorem \ref{Thm: Prob mixdensity sigma=0}. Overall, the theory we derive for $P(\hat\tau=0)$ and the use of a mixture distribution for $\hat\beta$ is quite accurate for all sample sizes, and with relatively large sample the mixture is a better representation. However, estimation of the corresponding distributions in the mixture requires $\beta_0$, $\tau_0$ and $\psi_0$ and therefore cannot be implemented in practice.
\begin{table}[ptb]\centering
\begin{tabular}{l|llll|l*{4}{c}r}\hline
& \multicolumn{4}{c}{Theoretical} & \multicolumn{5}{c}{Empirical from simulations}\\
& $\hat\beta|\hat\tau=0$ & $\hat\beta|\hat\tau>0$ & $\bar\kappa_1$ & $\bar\kappa_2$ & $\hat\beta|\hat\tau=0$ & $\hat\beta|\hat\tau>0$ & $\hat\kappa_{1}$ & $\hat\kappa_{2}$ &\\ \hline
\multirow{2}{*}{$n=200$} & 0.82(0.344)&1.10(0.421)& \multirow{2}{*}{48.70\%}& \multirow{2}{*}{49.20\%} & 0.83(0.354)& 1.07(0.431) & \multirow{2}{*}{45.56\%}&\multirow{2}{*}{45.54\%}\\
& 1.76(0.700)&2.15(0.819)& & &1.79(0.731) & 2.26(1.020) \\ \hline
\multirow{2}{*}{$n=500$}&0.82(0.217)&1.05(0.269)& \multirow{2}{*}{47.95\%} & \multirow{2}{*}{48.72\%} & 0.82(0.219) &1.04(0.261) &\multirow{2}{*}{46.27\%} &\multirow{2}{*}{46.26\%}\\
&1.76(0.443)&2.07(0.521) & & & 1.78(0.451)& 2.10(0.621)\\  \hline
\multirow{2}{*}{$n=10^3$}&0.82(0.154)& 1.02(0.193)& \multirow{2}{*}{47.10\%} & \multirow{2}{*}{48.20\%}& 0.82(0.156)& 1.01(0.182) &\multirow{2}{*}{45.52\%}
&\multirow{2}{*}{45.51\%} \\
&1.76(0.313) & 2.03(0.371)& & & 1.77(0.315)& 2.06(0.415)\\ \hline
\multirow{2}{*}{$n=5\cdot10^3$}& 0.82(0.069)&0.97(0.093)& \multirow{2}{*}{43.54\%} & \multirow{2}{*}{45.96\%}& 0.82(0.069) &0.96(0.089) &\multirow{2}{*}{42.93\%}
&\multirow{2}{*}{42.93\%}\\
& 1.76(0.140)&1.95(0.174)& & &1.76(0.139)& 1.96(0.184) \\ \hline
\multicolumn{4}{l}{\small Standard deviation in ``()"}
\end{tabular}
\caption{Mixture distribution of marginal likelihood estimates for binary independent time series} \label{tb: conv mixture coef}%
\end{table}

\subsection{Example 2: binomial data, correlated latent process}
\label{Sec: Example 2 Sim}

This simulation investigates bias and standard deviation of the marginal estimates for $m= 1, 2, 3$, $n = 200, 500$ and a range of serial dependence given by $\phi = -0.8, -0.2, 0, 0.2, 0.8$. The observed standard deviation of the estimates over the replications is compared to that given by the asymptotic covariance matrix in Theorem \ref{Thm: Consist&Asym Marginal Like Estimator}. We also give the empirical proportion of the event that $\hat \tau = 0$ using the proportion $\hat\tau\le10^{-6}$. The theoretical upper bound $\bar\kappa_{1}$ is approximated with $\bar{\kappa}_1$ defined in \eqref{eq: kappa 1}.

Table \ref{tb: asym MGLMM binomial} summarizes the results. For binomial series ($m\ge 2$), the empirical values are in good agreement with the asymptotic mean and standard deviation with bias of the estimates generally improving with $m$ or $n$ increasing. Also observed for binomial series is that, both theoretically and empirically, the probability of $\hat\tau=0$ decreases quickly by increasing either the number of trials $m$ or the sample size $n$. However, for binary responses, large asymptotic standard deviations are obtained for the reason explained in Section
\ref{SSc: asym CovMat}. In binary data, the probability of $\hat\tau=0$ is close to $50\%$ and although theoretically this ($\bar\kappa_1$) will converges to zero as $n\to\infty$, it is doing so slowly as $n$ increases. This can be explained by Theorem \ref{Thm: score dist}. Under the settings of this example, using AGQ,  $\sigma_{S}$ is calculated to vary from $0.72\times10^{-2}$ to $1.55\times10^{-2}$ and the values of $c_{1}$ and $c_{2}$ are the same as those in Example 1 and hence for, binary responses, $\sqrt{n}c_{S}/\sigma_S$  is at most $2\sqrt{n}\times 10^{-3}$ across the range of autocorrelation considered here. Thus the large values of  $\bar\kappa_{1}=\Phi(-\sqrt{n}c_{S}/\sigma_{S})$ across the range of autocorrelations is to be expected for this regression structure. However, for binomial series with $m=2$ and $n=200$, $\phi=0.8$ for instance, $\sigma_S\approx 0.351$, $\sqrt{n}c_{S}$ is dominated by $\sqrt{n}m(m-1)c_{1}=0.475$ which explains why $P(\hat\tau=0)$ decreases rapidly with $m>1$ and increasing $n$.

Interestingly, for binary series, as $n$ increases from 200 to 500 the bias of $\hat\tau$ worsens which seems somewhat counterintuitive. A plausible explanation for this is that the distribution of $\hat\tau$ is a mixture using weights
$P(\hat\tau=0)$, which is approximately $45\%$ across sample size and serial dependence, and $P(\hat\tau>0)$.
When $n=200$ the conditional distribution of $\hat\tau|\hat\tau>0$ has larger variance than when $n=500$ resulting in an inflated overall mean when $n=200$ relative to $n=500$.

In summary, the theoretical upper bound ($\bar\kappa_1$) is above or close to the empirical proportion of $P(\hat\tau=0)$, which is a pattern that is consistent with Theorem
\ref{Thm: score dist}. For binomial series ($m \ge 2$) the the marginal estimates have good bias properties and standard deviations explained by the large sample distribution of Theorem \ref{Thm: Consist&Asym Marginal Like Estimator} and these conclusions are not severely impacted by the level or direction of serial dependence. For binary series, the high proportion of $\hat \tau= 0$ is persistent regardless of $n=200, 500$ or the level of serial dependence and this is explained by theory presented above.

\begin{table}[ptb]\small\centering
\begin{tabular}[c]{l*{12}{c}r}
\multicolumn{10}{l}{$n=200$} \\ \hline
&&\multicolumn{3}{c}{$m=1$} & \multicolumn{3}{c}{$m=2$} & \multicolumn{3}{c}{$m=3$}\\
$\phi$ & & Mean & SD & ASD & Mean & SD & ASD & Mean & SD &ASD \\ \hline
\multirow{4}{*}{0.8} & $\hat\beta_1$&0.984&0.617&10.22&0.994&0.525&0.516&0.991&0.484&0.483\\
& $\hat\beta_2$&2.102&1.207&15.19&2.046&0.975&0.942&2.023&0.890&0.878\\
&$\hat\tau$&0.983&1.177&65.20&0.960&0.790&0.821&0.909&0.541&0.557\\
&$\hat\kappa(\bar\kappa_1)$ & \multicolumn{3}{c}{45.41\%(49.44\%)} &\multicolumn{3}{c}{11.06\%(8.85\%)} & \multicolumn{3}{c}{2.52\%(3.00\%)}\\ \hline
\multirow{4}{*}{0.2} & $\hat\beta_1$ &0.953&0.433&7.90&0.996&0.346&0.342&0.992&0.293&0.292\\
&$\hat\beta_2$&2.071&0.949&11.84&2.047&0.667&0.648&2.027&0.563&0.553\\
&$\hat\tau$&0.898&1.026&50.55&1.052&0.793&0.790&0.993&0.518&0.510\\
&$\hat\kappa(\bar\kappa_1)$& \multicolumn{3}{c}{45.08\%(49.38\%)}&
\multicolumn{3}{c}{8.08\%(7.15\%)} & \multicolumn{3}{c}{0.86\%(1.30\%)}\\ \hline
\multirow{4}{*}{0} & $\hat\beta_1$ &0.955&0.414&7.71&1.007&0.331&0.327&1.001&0.278&0.273 \\
& $\hat\beta_2$ &2.048&0.926&11.56&2.032&0.634&0.623&2.013&0.527&0.523\\
& $\hat\tau$ &0.869&1.015&49.35&1.063&0.786&0.789&1.003&0.512&0.509 \\
&$\hat\kappa(\bar\kappa_1)$& \multicolumn{3}{c}{46.72\%(49.37\%)} & \multicolumn{3}{c}{7.25\%(7.08\%)} & \multicolumn{3}{c}{0.78\%(1.24\%)}\\ \hline
\multirow{4}{*}{-0.2}& $\hat\beta_1$ &0.958&0.408&7.58&1.003&0.317&0.316&0.999&0.261&0.261\\
& $\hat\beta_2$&2.049&0.913&11.39&2.036&0.620&0.606&2.014&0.511&0.504\\
& $\hat\tau$ &0.880&1.017&48.57&1.059&0.794&0.789&1.004&0.516&0.510\\
&$\hat\kappa(\bar\kappa_1)$& \multicolumn{3}{c}{45.78\%(49.37\%)}  & \multicolumn{3}{c}{7.27\%(7.07\%)} &\multicolumn{3}{c}{0.8\%(1.23\%)} \\ \hline
\multirow{4}{*}{-0.8} &$\hat\beta_1$ &0.953&0.405&7.53&0.995&0.306&0.303&0.992&0.247&0.245\\
& $\hat\beta_2$ &2.045&0.932&11.34&2.038&0.623&0.603&2.023&0.512&0.500\\
& $\hat\tau$ &0.863&1.006&48.30&1.047&0.794&0.812&1.002&0.547&0.542\\
& $\hat\kappa(\bar\kappa_1)$ & \multicolumn{3}{c}{46.01\%(49.40\%)}& \multicolumn{3}{c}{7.92\%(7.88\%)} & \multicolumn{3}{c}{1.43\%(1.91\%)}  \\ \hline
&\\
\multicolumn{10}{l}{$n=500$} \\ \hline
&& \multicolumn{3}{c}{$m=1$} & \multicolumn{3}{c}{$m=2$} &\multicolumn{3}{c}{$m=3$} \\
$\phi$ & & Mean & SD & ASD & Mean & SD & ASD & Mean & SD &ASD \\ \hline
\multirow{4}{*}{0.8} & $\hat\beta_1$ &0.948&0.373&6.58&0.998&0.335&0.330&0.998&0.311&0.309\\
&$\hat\beta_2$&1.992&0.737&9.77&2.013&0.615&0.604&2.001&0.573&0.563\\
&$\hat\tau$&0.785&0.866&41.96&0.974&0.515&0.519&0.956&0.35&0.352&\\
&$\hat\kappa(\bar\kappa_1)$ & \multicolumn{3}{c}{44.77\%(49.36\%)}& \multicolumn{3}{c}{1.45\%(1.68\%)} & \multicolumn{3}{c}{0.07\%(0.14\%)}\\ \hline
\multirow{4}{*}{0.2} & $\beta_1$ &0.936&0.273&5.00&1.000&0.217&0.216&1.002&0.182&0.184\\
&$\hat\beta_2$ &1.953&0.589&7.50&2.012&0.412&0.410&2.002&0.349&0.350\\
&$\hat\tau$ & 0.694&0.772&31.99&1.011&0.492&0.499&0.998&0.325&0.322\\
&$\hat\kappa(\bar\kappa_1)$ & \multicolumn{3}{c}{46.18\%(49.32\%)} & \multicolumn{3}{c}{0.93\%(1.06\%)} & \multicolumn{3}{c}{0\%(0.02\%)}\\ \hline
\multirow{4}{*}{0} &$\hat\beta_1$ &0.938&0.263&4.87&0.997&0.205&0.206&0.999&0.173&0.172\\
&$\hat\beta_2$&1.959&0.572&7.32&2.011&0.395&0.394&2.005&0.333&0.331\\
&$\hat\tau$&0.708&0.767&31.21&1.011&0.502&0.499&0.996&0.321&0.322\\
&$\hat\kappa(\bar\kappa_1)$& \multicolumn{3}{c}{45.18\%(49.32\%)} & \multicolumn{3}{c}{1.04\%(1.04\%)} & \multicolumn{3}{c}{0\%(0.017\%)}\\ \hline
\multirow{4}{*}{-0.2} &$\hat\beta_1$ &0.934&0.257&4.79&1.000&0.202&0.200&0.996&0.164&0.164\\
&$\hat\beta_2$ &1.957&0.555&7.21&2.013&0.389&0.383&2.010&0.315&0.318\\
&$\hat\tau$ &0.689&0.763&30.71&1.014&0.493&0.499&0.998&0.329&0.322\\
&$\hat\kappa(\bar\kappa_1)$&\multicolumn{3}{c}{46.26\%(49.32\%)} &
\multicolumn{3}{c}{0.7\%(1.04\%)} &\multicolumn{3}{c}{0.01\%(0.017\%)}\\ \hline
\multirow{4}{*}{-0.8} &$\hat\beta_1$ &0.929&0.253&4.76&0.996&0.190&0.191&0.996&0.156&0.155\\
& $\hat\beta_2$ &1.962&0.570&7.18&2.012&0.386&0.381&2.011&0.319&0.317\\
& $\hat\tau$ &0.681&0.768&30.57&1.008&0.509&0.513&0.997&0.344&0.343\\
&$\hat\kappa(\bar\kappa_1)$& \multicolumn{3}{c}{47.10\%(49.34\%)} & \multicolumn{3}{c}{1.01\%(1.31\%)} & \multicolumn{3}{c}{0.02\%(0.05\%)}\\ \hline
\end{tabular}
\caption{Marginal likelihood estimates for Binomial observations under various values of $\phi$, where the true values are $\beta^0_1=1$; $\beta^0_2=2$; $\tau_0=1$.} \label{tb: asym MGLMM binomial}%
\end{table}

\subsection{Example 3: Estimate of Covariance Matrix}\label{Sec: Example 3 sim}

The subsampling method of estimating the covariance for marginal likelihood estimates is of limited practical value for binary data because, firstly, when $\hat\tau=0$, which occurs nearly 50\% of the time, the method does not provide estimates of the covariance matrix in Theorem \ref{Thm: Consist&Asym Marginal Like Estimator} and, secondly, because of the high proportion of $\hat\tau=0$, $\hat\delta$ has the mixture distribution given in Theorem \ref{Thm: Prob mixdensity sigma=0}, the covariance of which requires $\beta'$ and $\delta_0$, both of which are unknown and cannot be estimated from a single sequence.

In the binomial case ($m \ge 2$) the subsampling method is likely to be useful for a range of serial dependence.
Table \ref{tb: cov binom subsampling 2} presents simulation results under various levels of serial correlation. The table summarizes the estimates of standard deviation for $\hat\delta$ using the subsampling method described in Section \ref{SSc: CovMat Est}. The column ``$\textrm{ASD}$" contains the asymptotic standard deviation calculated with the covariance matrix in Theorem \ref{Thm: Consist&Asym Marginal Like Estimator} and column ``$\textrm{SD}$" contains empirical standard deviation. The table shows the subsampling estimates of standard deviations are of the same magnitude of theoretical standard deviations even for moderate sample size $n=200$ but are biased downwards and increasingly so as $C$ increases. Values of $C=1,2$ provide the least biased estimates for the standard errors for both sample sizes. Downwards bias is greater for large positive values of $\phi$ as might be expected.
\begin{table}[ptb]\centering
\begin{tabular}[c]{l*{8}{c}r}
\multicolumn{2}{l}{\small $m=2$, $n=200$} \\\hline
&& \textrm{ASD} & \textrm{SD} & $k_{n}=5$ & $k_{n}=11$ & $k_{n}=23$ &$k_{n}=46$ \\
\multirow{3}{*}{$\phi=0.8$} & $\hat\beta_1$ & 0.515 & 0.525 & 0.392 & 0.406 & 0.387 &0.325\\
& $\hat\beta_2$ & 0.942 & 0.975 & 0.734 & 0.755& 0.719 & 0.606\\
& $\hat\tau$ & 0.821 & 0.790 & 0.824& 0.811& 0.784 & 0.733\\ \hline
\multirow{3}{*}{$\phi=0.2$} & $\hat\beta_1$ & 0.342 & 0.346 & 0.333&0.316 &
0.284&0.235\\
& $\hat\beta_2$ & 0.648 & 0.667& 0.637 & 0.605 & 0.545& 0.447\\
& $\hat\tau$ & 0.790 & 0.793 &0.829& 0.814 & 0.786& 0.729\\ \hline
\multirow{3}{*}{$\phi=-0.2$} & $\hat\beta_1$ & 0.316 & 0.322&0.313&0.296&0.265 &0.221\\
& $\hat\beta_2$ & 0.606 & 0.616&0.604& 0.570 &0.511& 0.420\\
& $\hat\tau$ & 0.789 & 0.785& 0.831& 0.816& 0.786& 0.729\\ \hline
\multirow{3}{*}{$\phi=-0.8$} &$\hat\beta_1$& 0.303 & 0.305&0.300&0.283&0.255&0.214\\
&$\hat\beta_2$ & 0.603 & 0.619& 0.592 & 0.562& 0.508& 0.419\\
&$\hat\tau$& 0.812 & 0.796 & 0.842& 0.834 & 0.808& 0.750\\ \hline
\\
\multicolumn{2}{l}{\small $m=2$, $n=500$} \\\hline
&& \textrm{ASD} & \textrm{SD} & $k_{n}=7$ & $k_{n}=14$ & $k_{n}=28$ &$k_{n}=56$ \\
\multirow{3}{*}{$\phi=0.8$} & $\hat\beta_1$ & 0.330&0.330& 0.261 & 0.275 & 0.271 & 0.242\\
&$\hat\beta_2$ & 0.604 &0.607& 0.487 & 0.510& 0.501& 0.451\\
&$\hat\tau$&0.519 & 0.507&0.506 &0.501 &0.490 &0.469\\ \hline
\multirow{3}{*}{$\phi=0.2$} & $\hat\beta_1$ &0.216 & 0.217 &0.210&0.204 &0.192 & 0.169\\
&$\hat\beta_2$ & 0.410 &0.418 & 0.400&0.388 & 0.364& 0.321\\
&$\hat\tau$& 0.499 &0.493 & 0.503&0.497&0.486&0.465\\ \hline
\multirow{3}{*}{$\phi=-0.2$} & $\hat\beta_1$ & 0.199 & 0.199 & 0.197& 0.190 &0.179 &0.159\\
& $\hat\beta_2$ &0.383 & 0.387 & 0.378& 0.365 &0.343&0.303 \\
& $\hat\tau$ &0.499 & 0.503&0.505& 0.500& 0.490&0.469\\ \hline
\multirow{3}{*}{$\phi=-0.8$} &$\hat\beta_1$& 0.191 & 0.191 & 0.188 & 0.182& 0.171& 0.152\\
&$\hat\beta_2$& 0.381 & 0.387 & 0.372& 0.361 &0.340 & 0.302\\
&$\hat\tau$&0.513&0.513&0.514 &0.511&0.502&0.481\\ \hline
\end{tabular}
\caption{Subsampling estimates for standard deviation of GLMM estimation, $k_{n}=C[n^{1/3}]$, $C=1,2,4,8$.}
\label{tb: cov binom subsampling 2}%
\end{table}

\section{Alternative to Marginal Likelihood Estimation} \label{Sec: MGLM est}%

We close with a discussion of the alternative approach for binary time series regression modelling proposed by \cite{wu2014parameter}. Their modified GLM (MGLM) method replaces $\exp(x_{nt}^\T\beta)/(1+\exp(x_{nt}^\T\beta))$ in the GLM log-likelihood \ref{eq: loglikglm}  with a function $\pi(x_{nt}^\T\beta)$ representing the marginal mean to arrive at the objective function
\begin{equation} \label{eqn: MGLM objective function}
l_2(\beta) =\sum_{t=1}^n \left[ y_t \log \pi(x_{nt}^{\T}\beta) + (1-y_t)\log (1- \pi(x_{nt}^{\T}\beta)) \right], \quad \pi(x_{nt}^{\T}\beta)= \int \frac{e^{x_{nt}^{\T}\beta+\alpha_t}}{1+e^{x_{nt}^{\T}\beta+\alpha_t}} g(\alpha_t) d\alpha.
\end{equation}
The MGLM estimate $\hat\beta_2$ is found by iterating two steps starting with the GLM estimate of $\beta$: step 1, estimate the curve of $\pi(u)=\int e^{u+\alpha}/(1+e^{u+\alpha}) g(\alpha)d\alpha$ non-parametrically on $u\in \mathbb{R}$; step 2, maximize $l_2$ with respect to $\beta$, based on the estimate of $\pi(u)$ obtained in the first step. Steps 1 and 2 are repeated and the iteration stops when the maximum value of $l_2$ is reached and the last update of $\beta$ is then regarded as the MGLM estimator. Implementation details are provided in \cite{wu2014parameter}.

In defining their method \cite{wu2014parameter} do not require that $\pi(u)=\int e^{u+\alpha}/(1+e^{u+\alpha}) g(\alpha)d\alpha$ for any distribution $g$ of the latent process. Hence it is not required that $\pi(u)$ be non-negative and strictly increasing in $u$. However, their main theorem concerning consistency and asympototic normality of $\hat \beta_2$ is stated in terms of this latent process specification. For such specifications, application of their non-parameteric method for estimating $\pi(u)$ requires additional constraints which are not currently implemented. For example, taking the first derivative with respect to $u$, gives $\dot \pi(u)= \int e^{u+\alpha}/(1+e^{u+\alpha})^2 g(\alpha)d\alpha\le \pi(u)(1-\pi(u))$ so  $\dot\pi(u)\in [0,0.25]$ by application of Jensen's inequality. When applied to the Cambridge-Oxford Boat Race time series the non-parameteric estimate of $\pi(u)$ is not monotonic and produces marginal estimates, at values of $u$ between the gaps in the observed values of $x_{nt}^\T \hat\beta_2$, which are zero and therefore not useful for prediction at new values of the linear predictor.

Although not implemented in the R-code of \cite{wu2014parameter}, this constraint as well as that of monotonicity can be enforced in the nonparametric estimation of $p(u)$ using an alternative local linearization to that used in \cite{wu2014parameter}. For example, with constraints of monotonicity and $\dot \pi(u)\in [0,0.25]$, different estimates $\hat\beta_1=0.2093$, $\hat\beta_2=0.1899$ (compared to $\hat\beta_1=0.237$, $\hat\beta_2=0.168$ in \cite{wu2014parameter}) are observed for the model for the Cambridge-Oxford boat race series that they analyse. In this example, the marginal likelihood estimates give $\hat\tau=0$ and hence $\hat \beta$ degenerates to GLM estimate which differs from that of \cite{wu2014parameter}. Somehow the marginal method (with or without monotonocity contraints) is avoiding the degeneracy issue that arises with the marginal estimation method proposed in this paper. This needs to be further understood.

While MGLM is computationally much more intensive than using standard GLMM methods for obtaining the marginal estimates it appears to avoid degeneracy but for reasons that are not fully understood at this stage. Additionally, it is not clear the extent to which MGLM with or without the proper constraints implied by a latent process specification avoids the high proportion of degenerate estimates observed with GLMM for binary data. Additionally the extent to which MGLM reproduces the correct curve for the marginal probabilities $\pi(u)$ when the true data generating mechanism is defined in terms of a latent process (parameter driven specification) has not been investigated. The extent to which the MGLM estimate of $\pi(u)$ differs from the curve defined by a latent process specification might form the basis for a non-parametric test of the distribution of the latent process -- when $\{\alpha_t\}$ is not Gaussian, the true curve of $\pi(u)$ is not the same with that evaluated under GLMM fits, and we may end up into different results for the estimates of $\hat\beta_2$ and $\hat\beta$.

\section{Discussion}\label{Sec: discussion}

To overcome the inconsistency of GLM estimates of the regression parameters in parameter driven binomial models time series models we have proposed use of the marginal likelihood estimation, which can be easily conducted using the generalized linear mixing model fitting packages. We have shown that the estimates of regression parameters and latent process variation obtained from this method are consistent and asymptotically normal even if the observations are serially dependent. The distribution of the marginal estimates is required for a score test of serial dependence in the latent process something which we will report on elsewhere. The asymptotic results and proofs thereof have assumed that the latent process is Gaussian which has helped streamline the presentation. This is not required for all results except for Lemma 2 (asymptotic identifiability for the binary case) which relies directly on the normal distribution. The proofs can be readily modified provided we assume that the moment generating function $m_{\alpha_t}(u)$ of $\alpha_t$ is finite for all $u < \sqrt{(d_2)}$ where $\tau < d_2$ defines the parameter space.

The structure of the model considered here is such that the theoretical results apply to other response distributions such as the Poisson and negative binomial with very little change in the proofs of theorems. GLM estimation in these cases is consistent and asymptotically normal regardless of serial dependence in the latent process. The same will be true of the use of marginal estimation with the advantage that the latent process variability is also estimated. While we have not yet shown this, we expect that for these other response distributions the use of marginal estimation will lead to more efficient estimates of the regression parameters.

For all response distributions and for moderate sample sizes the marginal estimation method can result in a non-zero probability of $\hat \tau  = 0$. As we have observed in simulations, and explained via theoretical asymptotic arguments, this is particularly problematical for binary responses with very high probabilities being observed (and expected from theory) for `pile-up' probability for $\hat \tau$. We have observed that for binomial data ($m>1$) the `pile-up' probability quickly decreases to zero and, as a result of this observation, we anticipate that this probability will not typically be large for Poisson and negative Binomial responses.

For binary data we have developed a useful upper bound approximation to this probability and subsequently proposed an improved mixture distribution for $\hat\beta$ in finite samples. These theoretical derivations are well supported by simulations presented. While this mixture distribution cannot be used based on a single time series none-the-less it provided useful insights into the sampling properties of marginal estimation for binary time series. Additionally the derivations suggest that regression models in which $x^{\T} \beta$ varies over an interval over which the inverse logit function is approximately linear will be particularly prone to the `pile-up' problem and this persists even when there is strong serial dependence. Practitioners should apply the marginal likelihood method with caution in such situations.

\section{Acknowledgements}

We thank Dr Wu and Dr Cui for providing us with the R-code for their application of the MGLM method to the Boat Race Data reported in \cite{wu2014parameter}.

\section{Appendix: A} \label{Sec: Pf: A}

\begin{proof}[Proof of Lemma \ref{lem: Identifiable Binom}] \label{Pf: lem: Identifiable Binom}
We consider the deterministic regressors only in this proof but the same arguments can be extended to stochastic regressors. Now $M$ is the largest value of $m$ for which $k_M>0$, so $Q(\delta) = Q(\delta_0)$ if and only if \begin{equation*}
\int_0^1 \sum_{j=0}^M \pi^0(j,h(u))\left(\log \pi(j,h(u)) - \log \pi^0(j,h(u))
\right) du = 0
\end{equation*}

Since the integrand is non-positive the integrand can be zero if and only if the integrand is zero almost everywhere. Hence, for a contradiction, assume $\exists \delta\ne \delta_0$ such that
\begin{equation*}
\sum_{j=0}^M \pi^0(j,h(u))\left(\log \pi(j,h(u))- \log \pi^0(j,h(u))\right) = 0,
\quad \forall u \in [0,1].
\end{equation*}
and this can only happen if $\pi^0(j,h(u))=\pi(j,h(u))$ for all $j=0,\ldots,M$. Since
\[
\pi(j,h(u))=\int {{M}\choose {j}}\dot{b}(h(u)^{\T}\beta+\sqrt{\tau} z)^j (1-\dot{b}(h(u)^{\T}\beta+\sqrt{\tau} z))^{M-j}\phi(z)dz
\]
it is straightforward to show, by iterating from $j=0, \ldots, M$, that  $\pi^0(j,h(u))=\pi(j,h(u))$ for all $j=0,\ldots,M$ is equivalent to
\begin{equation}\label{eq: Binom identity Spec}%
\int \dot{b}(h(u)^{\T}\beta+\sqrt{\tau} z)^j\phi(z)dz = \int \dot{b}(h(u)^{\T}\beta_0+\sqrt{\tau_0} z)^j\phi(z)dz, \quad j=1,\ldots,M,
\end{equation}
for any $u\in [0,1]$

We next show that the only way this can hold is if $\delta=\delta_0$. Fix $u$ and denote $a = h(u)^{\T}\beta$, $a_0=h(u)^{\T}\beta_0$ and $\sigma=\sqrt{\tau}$,
\[
d_j(a,\sigma) = E\left[\dot{b}(a+\sigma z)^j\right] - E\left[\dot{b}(a_0+\sigma_0 z)^j\right],
\quad j=1,\ldots,M.
\]
where expectation is with respect to the density $\phi(\cdot)$. Hence \eqref{eq: Binom identity Spec} is equivalent to
so that $d_1(a_0,\sigma_0) = d_2(a_0,\sigma_0) = \cdots = d_M(a_0,\sigma_0) = 0$. Assume $\eta_0 = (a_0,\sigma_0)$, if there exists $\eta\ne \eta_0$ such that \eqref{eq: Binom identity Spec} holds, then there is $\Vert\eta^\ast-\eta_0\Vert \le \Vert\eta-\eta_0\Vert$ such that
\begin{equation}\label{Pf: lem: eq: JacMat}%
\begin{pmatrix}
d_1(a,\sigma) \\
d_2(a,\sigma) \\
\vdots\\
d_M(a,\sigma)
\end{pmatrix} = %
\begin{bmatrix}
E \left[\dot{b}(\eta^\ast)^{0}\ddot{b}(\eta^\ast)\right] &  E \left[ \dot{b}(\eta^\ast)^{0}\ddot{b}(\eta^\ast)z\right]\\
2 E \left[\dot{b}(\eta^\ast)^{1}\ddot{b}(\eta^\ast)\right] & 2 E \left[ \dot{b}(\eta^\ast)^{1}\ddot{b}(\eta^\ast)z \right]\\
\vdots & \vdots\\
M E \left[ \dot{b}(\eta^\ast)^{M-1}\ddot{b}(\eta^\ast)\right] &
M E \left[ \dot{b}(\eta^\ast)^{M-1}\ddot{b}(\eta^\ast)z\right]
\end{bmatrix}
\begin{pmatrix}
v_1 \\
v_2
\end{pmatrix} = J(\eta^\ast)\binom{v_1}{v_2}
\end{equation}
where $v_1=a-a_0$ and $v_2=\sigma-\sigma_0$ cannot be zero at the same time when
$\eta\ne \eta_0$, which is equivalent to the matrix $J(\eta^\ast)$ being of full rank. But $J(\eta^\ast)$ is not of full rank if and only if the ratios of the second column to the first column are the same for all $j=1,\ldots, M$. However, we now show that this ratio increases as $j$ increases.  Since $\dot{b}(\cdot)$ and $\ddot{b}(\cdot)$ are non-negative functions we can define
probability densities
\[
g_j(z) = \frac{\dot{b}(a^\ast +\sigma^\ast z)^{j-1}\ddot{b}(a^\ast+\sigma^\ast z) \phi(z)}{\int \dot{b}(a^\ast +\sigma^\ast z)^{j-1}\ddot{b}(a^\ast+\sigma^\ast z) \phi(z)d z}, \quad j=1, \ldots, M
\]
so that
\begin{equation*}
\frac{E\left[\dot{b}(\eta^\ast)^{j}\ddot{b}(\eta^\ast)z\right]}{E\left[ \dot{b}(\eta^\ast)^{j}\ddot{b}(\eta^\ast)\right]} = \frac{\int z \dot{b}(a^\ast+
\sigma^\ast z) g(z)d z}{\int \dot{b}(a^\ast+\sigma^\ast z) g(z)d z} = \frac{E_{g_j}(z \dot{b}(a^\ast+\sigma^\ast z))}{ E_{g_j}(\dot{b}(a^\ast+\sigma^\ast z))}
\end{equation*}
where $E_{g_j}()$ denotes expectation with respect to $g_j$.
But since $\dot{b}(a^\ast+\sigma^\ast z)$ is an increasing function of $z$, $z$ and $\dot{b}(a^\ast+\sigma^\ast z)$ are positively correlated. Therefore, \begin{equation*}
E_{g_{j}}(z)< E_{g_{j}}(z \dot{b}(a^\ast+\sigma^\ast z))/E_{g_{j}}(\dot{b}(a^\ast+\sigma^\ast z))
\end{equation*}
it follows that
\begin{equation*}
\frac{E\left[\dot{b}(\eta^\ast)^{j-1}\ddot{b}(\eta^\ast)z\right]}{E\left[ \dot{b}(\eta^\ast)^{j-1}\ddot{b}(\eta^\ast)\right]} < \frac{E\left[ \dot{b}(\eta^\ast)^{j}\ddot{b}(\eta^\ast)z\right]}{E\left[ \dot{b}(\eta^\ast)^{j}\ddot{b}(\eta^\ast)\right]}, \quad j=1,\ldots, M.
\end{equation*}
Then when \eqref{eq: Binom identity Spec} holds, \eqref{Pf: lem: eq: JacMat} has a unique solution of $(0,0)$ for $(v_1,v_2)$, which contradicts to the assumption that $\eta\ne \eta_0$. Thus
\eqref{eq: Binom identity Spec} holds if and only if $\eta=\eta_0$, which implies
$a=a_0$ and $\sigma=\sigma_0$. By Condition \ref{Cond: Reg Full Rank}, we can conclude that $a=a_0$ for all $u$ implies $\beta=\beta_0$. Therefore Condition \ref{Cond: Ident and Const} holds for
$M\ge 2$.
\end{proof}

\begin{proof}[Proof of Lemma \ref{lem: Identifiable Binary}]\label{Pf: lem: Identifiable Binary}%

This proof considers the Fourier transform method used in \cite{wang2003matching}. Assume $\exists \delta\ne \delta_0$ such that $\forall x\in \mathbb{X}$,
\begin{equation*}
\pi^0(1)=\int \dot{b}(x^{\T}\beta_0 + \sigma_0 z)\phi(z)dz = \int \dot{b}(x^{\T}\beta+ \sigma z)
\phi(z)dz =\pi(1), \quad \sigma=\sqrt{\tau}.
\end{equation*}
For connected $\mathbb{X}$, the first derivative with respect to $x$ of both sides are also the same, that is
\begin{equation}\label{eq: pi0 d1 to pi d1}%
\int \ddot{b}(x^{\T}\beta_0+\sigma_0 z)\phi(z)dz \cdot \beta_0 = \int \ddot{b}(x^{\T}\beta+
\sigma z)\phi(z)dz \cdot \beta
\end{equation}
which implies there exists a constant $c_1>0$ such that $c_1= \beta_{k}/\beta_{0,k}$ for $k=1,\ldots,r$, with $\eta=x^{\T}\beta$, $c_2=\sigma/\sigma_0 >0$, \eqref{eq: pi0 d1 to pi d1} can be rewritten as convolutions
\begin{equation*}
\int g(u)h(\eta-u)du = c_1 \int g(u)h(c_1\eta-c_2 u)du; \quad
h(x)= \frac{e^{x}}{(1+e^{x})^2}, \quad g(x)=\frac{1}{\sigma_0\sqrt{2\pi}}\exp(-\frac{x^2}{2\sigma_0^2}).
\end{equation*}
let $G(s)$ be the Fourier transform of the normal density $g(\cdot)$, $H(s)$ be the Fourier transform of the logistic density $h(\cdot)$, using the fact that the Fourier transform of the convolution is the product of Fourier transform of each function,
\begin{align*}
G(s)H(s)&=\int \left(\int c_1 \phi(u)h(c_1\eta-c_2 u)du\right) e^{-i\eta s}d\eta \\
&= c_1 \int \phi(u) \left(\int h(c_1 \eta - c_2 u) e^{-i\eta s}d\eta\right)du \\
&= c_1 \int \phi(u) \left(\int h(c_1 \eta - c_2 u) e^{-i(c_1\eta - c_2 u)(s/c_1)}
d(c_1\eta - c_2 u)\right)|c_1|^{-1}e^{-iu(c_2s/c_1)}du\\
&= \int \phi(u) e^{-iu(c_2s/c_1)}du H(s/c_1) = \frac{c_1}{|c_1|}G(c_2s/c_1)H(s/c_1),
\quad c_1>0.
\end{align*}
it follows that $G(s)H(s)= G(c_2s/c_1)H(s/c_1)$, $\forall s\in \mathbb{R}$. The Fourier transform for the mean zero normal distribution is $G(s) = \exp(-\frac{1}{2}\sigma_0^2 s^2)$, and the Fourier transform for the logistic distribution is $H(s)= 2\pi s/(e^{\pi s}- e^{-\pi s})$, thus for any $s\ne 0$,
\begin{equation*}
\exp(-\frac{1}{2}\sigma_0^2 s^2)\frac{1}{\sinh(\pi s)} = \exp(-\frac{1}{2}\sigma_0^2 s^2 \left(\frac{c_2}{c_1}\right)^2)\frac{1}{c_1\sinh(\pi s/c_1)}
\end{equation*}
for any fixed $c_1\ne 1$, $c_2$ can be expressed as a function of $s$ and this function is not a constant over $s$, which contradicts to the definition of $c_2$. Hence the equality holds if and only if $c_1=1$ and $c_2=1$.
\end{proof}

\section{Appendix: B} \label{Sec: Pf: B}

\begin{proof}[Proof of Theorem \ref{Thm: Consist&Asym Marginal Like Estimator}]
\label{Pf: Thm: Consist&Asym Marginal Like Estimator}

This proof is presented in three steps: first, we show, for any $\delta \in \Theta$, that $E(Q_n(\delta))$ defined in \eqref{eqn: E Qn delta} converges to $Q(\delta)$ defined in \eqref{eqn: lim Q delta Cond 2a} and \eqref{eqn: lim Q delta Cond 2b} under Condition 2a and 2b respectively; second, that $Q_n(\delta)- E(Q_n(\delta))\overset{\textrm{a.s}}{\to} 0 $ where $Q_n(\delta)$ is defined in \eqref{eqn: Qn delta} from which, using compactness of the parameter space, it follows that  $\hat\delta\overset{\textrm{a.s.}}\rightarrow \delta_0$; third, that
$\sqrt{n}(\hat\delta -\delta_0)\overset{\textrm{d}}\to N(0,\Omega_{1,1}^{-1}\Omega_{1,2}\Omega_{1,1}^{-1})$.

\noindent \textit{Proof that} $E(Q_n(\delta)) \overset{\textrm{a.s.}}{\to}Q(\delta)$: Use Jensen's inequality multiple times we have
\begin{align}\label{eq: Min ln pi}
& - \sum_{j=0}^{m_t}\pi_{t}^0(j)\ln \pi_{t}(j) \le\left(\sum_{j=0}^{m_t}\pi_{t}^0(j)\right) \left(\sum_{j=0}^{m_t}(-\ln \pi_{t}(j)) \right) = - \sum_{j=0}^{m_t}\ln \pi_{t}(j) \nonumber\\
\le & - \sum_{j=0}^{m_t} \left(j(x_{t}^T\beta) - m_t (\ln 2 + \max(x_{t}^T\beta + \tau/2,0)) + c(j) \right) \nonumber \\
\le & \sum_{j=0}^{m_t} \left[ m_t(\ln 2 + \tau/2)+ m_t\vert x_{t}^T\beta\vert - c(j)\right] < m_t(1+m_t)\left(\ln 2 + \tau/2 + \vert x_{t}^T\beta\vert\right)
\end{align}
then conditional on $m_t$ and $x_{t}$, $\sum_{j=0}^{m_t}\pi_{t}^0(j)\ln \pi_{t}(j)$ is bounded for all $t$ and $\delta$. Under Condition 2a, the regressor $x_{nt}:=h(t/n)$ is nonrandom as is the marginal density $\pi_{nt}$. Then the strong law of large numbers for mixing processes \citep{mcleish1975maximal} applied to $\{m_t\}$  gives
\begin{equation*}
\lim_{n\to\infty}\frac{1}{n}\sum_{t=1}^n \sum_{j=0}^{m_t}\pi_{nt}^0(j)\log \pi_{nt}(j) =
\lim_{n \to \infty} \frac{1}{n}\sum_{t=1}^n E\left[ \sum_{j=0}^{m_t} \pi_{nt}^0(j)\log \pi_{nt}(j)\right]=Q(\delta)
\end{equation*}
defined in \eqref{eqn: lim Q delta Cond 2a}.
For Condition 2b the ergodic properties of the stationary processes $m_t$ and $X_t$ can be used to establish
\begin{equation*}
\lim_{n\to\infty} \frac{1}{n}\sum_{t=1}^n \sum_{j=0}^{m_t}\pi_t^0(j)\log \pi_t(j)= \lim_{n\to\infty}\sum_{m=1}^M \frac{n_m}{n} \left(\frac{1}{n_m}\sum_{\{t:m_{t}=m\}}\sum_{j=0}^m \pi_{t}^0(j)\log \pi_{t}(j)\right)=
Q(\delta)
\end{equation*}
defined in \eqref{eqn: lim Q delta Cond 2b}.
\bigskip

\noindent \textit{Consistency}: We write \eqref{eqn: Qn delta} as $Q_n(\delta) = n^{-1} \sum_{t=1}^\infty q_t(\delta)$ where $q_{t}(\delta) = \log f(y_{t}|x_{nt},\delta)$.  By \citet[Proposition 1]{blais2000limit}, $\{q_t(\delta)\}$ is strongly mixing for any $\delta \in \Theta$. To apply the strong law of large numbers for mixing process in \cite{mcleish1975maximal} we need to show that $\exists \lambda \geq 0$ such that
\begin{eqnarray*}
\sum_{t=1} ^{\infty} \|q_{t}(\delta) - E q_{t}(\delta)\|_{2+\lambda}^{2}/t^{2} < \infty
\end{eqnarray*}
where $\|\cdot\|_p$ denotes the $L^p$ norm. By Minkowski's inequality and H\"{o}lder's inequality,
\begin{equation*}
\left\Vert q_{t}(\delta) - E q_{t}(\delta) \right\Vert_{2+\lambda} \le 2\left\Vert q_{t}(\delta)\right\Vert_{2+\lambda}
\end{equation*}
using similar derivations as used in \eqref{eq: Min ln pi},
$|q_{t}(\delta)| \le \vert - m_t |x_{nt}^T\beta| + c(y_t) - m_t(\ln 2+\tau/2) \vert$
where $m_t$ is bounded. It suffices to establish $\sum_{t=1}^\infty \Vert q_t(\delta)\Vert_{2+\lambda}^2/t^2< \infty$ if $\sum_{t=1}^\infty \vert x_{nt}^{\T} \beta\vert^2/t^2<\infty$. Under Condition 2a, $\{x_{nt}^{\T}\beta\}$ is bounded for any given $\beta$; $\sum_{t=1}^\infty 1/t^2< 2$, therefore the result follows. Under Condition 2b, we have $E\Vert x\Vert^2 <\infty$, given any $\beta$, for all $\varepsilon > 0$,
\begin{equation*}
P\left(\sum_{t=1}^n \vert x_t^{\T}\beta\vert^2/t^2 \ge \emph{K}\right) \le (2/\emph{K})
E\vert x^{\T}\beta\vert^2 \le \varepsilon, \quad \textit{ if } \quad \emph{K}\ge 2E\vert x^{\T}\beta\vert^2/\varepsilon.
\end{equation*}
Then $n^{-1}\sum_{t=1}^{n} \left[q_{t}(\delta) - E q_{t}(\delta)\right] \overset{a.s.} \rightarrow 0$ for any $\delta \in \Theta$. Together with the first part of the proof given above we now have $Q_n(\delta) \overset{\textrm{a.s}}{\to} Q(\delta)$. Since $\Theta$ is a compact set, and $Q_{n}(\delta)$ is a continuous function of $\delta$ for all $n$, by
\citet[Theorem 3.3]{gallant1988unified}, $\hat{\delta}: = \arg \underset{\Theta}\max~Q_n(\delta)\overset{a.s.} \rightarrow \delta_{0}.$
\bigskip

\noindent \textit{Asymptotic Normality}: Using a Taylor expansion,
\begin{equation*}
\sqrt{n}(\hat\delta - \delta_0)= -\left(\frac{1}{n}\sum_{t=1}^n \ddot{l}_t(\delta^\ast)\right) ^{-1} \frac{1}{\sqrt{n}}\sum_{t=1}^n \dot{l}_t(\delta_0), \quad \delta^\ast\overset{a.s.}\to \delta_0
\end{equation*}
the asymptotic normality of $\sqrt{n}(\hat\delta-\delta_0)$ can be obtained if
\[ -\frac{1}{n}\sum_{t=1}^n \ddot{l}_t(\delta^\ast) \overset{p}\to \Omega_{1,1} \quad \textrm{and} \quad \frac{1}{\sqrt{n}}\sum_{t=1}^n \dot{l}_t(\delta_0)\overset{d}\to N(0,\Omega_{1,2}). \]

Conditional on $m_t$ and $x_{nt}$, $\{\ddot{l}_t(\delta)\}$ is strongly mixing. Then by Chebyshev's inequality and \citet[Theorem 17.2.3]{ibragimov1971independent}, $\exists \epsilon>0$ such that
\[
\frac{1}{n}\sum_{t=1}^n \ddot{l}_t(\delta)- \frac{1}{n}\sum_{t=1}^n E(\ddot{l}_t(\delta)) \overset{p}\to 0, \quad \Vert \delta-\delta_0\Vert \le \epsilon
\]
Since $\delta^\ast\overset{a.s.}\to \delta_0$, and the continuity of $\ddot{l}_{t}(\cdot)$ with respect to $\delta$, $n^{-1}\sum_{t=1}^n \ddot{l}_{t}(\delta^\ast) - n^{-1}\sum_{t=1}^n \ddot{l}_t(\delta_0)\overset{a.s.}\to 0$, and $n^{-1}\sum_{t=1}^n \ddot{l}_t(\delta_0) - n^{-1}\sum_{t=1}^n E(\ddot{l}_t(\delta_0)) \overset{\textrm{a.s.}}{\to} 0$. Now $E(\ddot{l}_t(\delta_0))= E(\dot{l}_t(\delta_0)\dot{l}_t^{\T}(\delta_0))$, and hence it follows that $n^{-1}\sum_{t=1} \ddot{l}_t(\delta^\ast) \overset{p}\to \Omega_{1,1}$.

Next we show that $\Omega_{1,1}$ is positive definite. Let $s=(s_{1},s_{2})$ be an $(r+1)$ dimensional constant vector, without lose of generality, $s^{\T} s= 1$. Define $q_{t}(\delta_0)=s^{\T}\dot{l}_t(\delta_0)$, note $\det(\Omega_{1,1})\ge 0$ and $\det(\Omega_{1,1})=0$ only if $E\left(q_t^2(\delta_0)\vert m_t, x_{nt} \right)=0$ for all $t$.
Then under Condition \ref{Cond: Reg Full Rank}, $\Omega_{1,1}$ is positive definite. The limit, $\Omega_{1,1}$, under Condition \ref{Cond: Reg Trend Type}a is given in \eqref{eq: PD InfMat Lim}. For stationary regressors such limit can be easily obtained using ergodic theorem.

Next we show $\Omega_{1,2}$ exists. Note
\begin{equation*}
\Omega_{1,2}=\underset{n\to\infty}\lim \Var\left(\frac{1}{\sqrt{n}}\sum_{t=1}^n q_{t}(\delta_0) \right) = \sum_{h=0}^{n-1}\left(\frac{1}{n}\sum_{t=1}^{n-h} \Cov(q_t(\delta_0), q_{t+h}(\delta_0))\right) + \sum_{h=1}^{n-1}\left(\frac{1}{n}\sum_{t=h+1}^n \Cov(q_{t}(\delta_0), q_{t-h}(\delta_0))\right)
\end{equation*}
then $\Omega_{1,2}$ exists if
\[
\underset{n\to\infty}\lim \sum_{h=0}^{n-1}\left(\frac{1}{n}\sum_{t=1}^{n-h} |\Cov(q_{t}(\delta_0), q_{t+h}(\delta_0)) | \right) < \infty, \quad \underset{n\to\infty}\lim \sum_{h=1}^{n-1} \left(\frac{1}{n}\sum_{t=h+1}^n |\Cov(q_{t}(\delta_0), q_{t-h}(\delta_0))|\right) < \infty.
\]
Since the $\{q_t(\delta_0)\}$ is strong mixing, by Theorem 17.3.2 in \cite{ibragimov1971independent},
\begin{equation*}
\sum_{h=0}^{n-1}\left(\frac{1}{n}\sum_{t=1}^{n-h} \left\vert \Cov(q_t(\delta_0), q_{t+h}(\delta_0)) \right\vert \right) \le 2 \sum_{h=0}^{n-1} \nu(h)^{\lambda/(2+\lambda)}W_h < \infty,
\end{equation*}
where \begin{equation*} W_h = \lim_{n\to\infty} \frac{1}{n}\sum_{t=1}^{n-h}\left[ 4+ 3(c_t c_{t+h}^{1+\lambda} + c_t^{1+\lambda}c_{t+h}) \right], \quad c_t\ge \|q_{t}(\delta_0)\|_{2+\lambda} \end{equation*} Such $c_t$ exists, for example, take $\lambda=2$, use Cauchy-Schwarz's inequality,
\begin{align*}
E\left( |q_{t}(\delta_0)| ^{4}|m_t,x_{nt}\right) & \le E\left\{ \int f(y_{t}|x_{nt},z_{t},\delta_0) \phi(z_{t})\left((y_{t}-m_t\dot{b}(W_{0,t}))(s_{1}^{\T}x_{nt}+ s_{2}z_{t})\right)^{4} dz \cdot f^{-1}(y_{t}|x_{nt},\delta_0)\right\}\\
& = E\left[ m_t\ddot{b}(W_{0,t})(1+ (3m_t-6)\ddot{b}(W_{0,t}))(s_{1}^{\T}x_{nt}+ s_{2}z_{t})^{4} | m_t, x_{nt}\right]
\end{align*}
Then by the application of CLT for mixing process in Theorem 3.2, \cite{davidson1992central} we will have \newline $n^{-1/2}\sum_{t=1}^n \dot{l}_t(\delta_0)\overset{d}\to N(0,\Omega_{1,2})$.
\end{proof}

\begin{proof}[Proof of Theorem \ref{Thm: GLM asymptotics}] \label{Pf: Thm: GLM asymptotics}
Following the proof of \cite{davis2000autocorrelation} and \cite{wu2014parameter}, let $u = \sqrt{n}(\beta - \beta ^\prime)$ (note here we centre on $\beta'$ and not the true value $\beta_0$ as was done in these references). Then maximizing
\begin{equation*}
l_n(\beta)= \sum_{t=1}^n\left[ y_t(x_{nt}^{\T}\beta)- m_t b(x_{nt}^{\T}\beta) + c(y_t)\right]
\end{equation*}
over $\beta$ is equivalent to minimizing $g_n(u)$ over $u$ where $g_{n}(u):= -l_{n}(\beta^\prime + u/\sqrt{n}) + l_{n}(\beta^{\prime})$. Let $\hat{u}=\arg \min \underset{n\to\infty}\lim g_n(u)$. Write $g_{n}(u): = B_{n}(u) - A_{n}(u)$ where
\[
B_{n}(u): =\sum_{t=1} ^{n} m_{t} \left( b(x_{t}^{T}\beta^\prime + x_t^T u/\sqrt{n})- b(x_{t}^{T}\beta^{\prime}) - \dot{b}(x_{t}^{T}\beta^{\prime})x_{t}^{T} u/\sqrt{n}\right)
\]
and
\[
A_{n}(u): = u^T \frac{1}{\sqrt{n}}\sum_{t=1}^{n}(y_{t} - m_{t}\dot{b}(x_{t}^{T}\beta^{\prime}))
x_{t} = u^T U_n.
\]
Using similar procedures as in the proof of Theorem 1. in \cite{wu2014parameter} it is straightforward to show
\[
B_{n}(u) \rightarrow \frac{1}{2}u^{T} \Omega_{1} u
\]
and
\[
E\left( e^{is^{T}U_{n}}\right) = \exp\left[-\frac{1}{2}s^T\Omega_{2}s\right].
\]
for each $u$. Since $g_{n}(u)$ is a convex function of $u$ and $\hat{u}_n$ minimizes $g_n(u)$, then an application of the functional limit theory gives $\hat{u}_n\overset{d}\to \hat{u}$, where $\hat{u}=\arg \min \underset{n\to\infty}\lim g_n(u)$. In conclusion,
\begin{equation*}
g_n(u)\overset{d}\to g(u)= \frac{1}{2}u^T \Omega_1 u - u^T N(0,\Omega_2)
\end{equation*}
on the space $C(\mathbb{R}^r)$, and $\hat{u}_n\overset{d}\to \hat{u}$, where $\hat{u}\sim
N(0, \Omega_{1}^{-1}\Omega_{2}\Omega_{1}^{-1}).$
\end{proof}

\begin{proof}[Proof of Theorem \ref{Thm: score dist}] \label{Pf: Thm: score dist}%

Since $S_{1,n}(\tilde\beta)$, a linear function of $U_{1,n}$ and $U_{2,n}$, to show that $\sqrt{n}\left(S_{1,n}(\tilde\beta)- E(S_{1,n}(\beta^\prime))\right)$ is normally distributed it is sufficient to show that the the joint distribution of $(U_{1,n}, U_{2,n})$ is multivariate normal. As defined in \eqref{eq: U1 U2},
\[
U_{1,n}:= S_{1,n}(\beta^\prime) - E\left(S_{1,n}(\beta^\prime)\right)= \frac{1}{\sqrt{n}} \sum_{t=1}^{n} e_{t,\beta^\prime} ^2 - E e_{t,\beta'}^2; \quad
U_{2,n}:= \Omega_{1}^{-1}\frac{1}{\sqrt{n}}\sum_{t=1}^{n} e_{t,\beta^\prime}x_{nt}= \frac{1}{\sqrt{n}}\sum_{t=1}^{n} e_{t,\beta^\prime}c_{nt}
\]
where $c_{nt} = \Omega_{1}^{-1}x_{nt}$, defines a sequence of non-random vectors.

Let $U_{nt}= (U_{1,nt}, U_{2,nt})$ be the joint vector at time $t$, then each dimension of $\{U_{nt}\}$ is uniformly bounded, strongly mixing and $E(U_{nt})=0$. We need to prove that $a_{1} U_{1,n} + a_{2}^T U_{2,n}$ has normal distribution for arbitrary constant vector $a = (a_1, a_2)$ where $a^T a =1$ without loss of generality. By the SLLN for mixing process in  \cite{mcleish1975maximal}, there exists a limiting matrix $\Omega_{U}$ such that
\begin{equation*}
\Var(\sum_{t=1}^{n} a^T U_{nt}) = \sum_{h=0}^{n-1}\left(\sum_{t=1}^{n-h} a^T\textrm{Cov}
(U_{nt}, U_{n,t+h}) a\right) + \sum_{h=1}^{n-1}\left(\sum_{t=1+h}^{n} a^T\textrm{Cov}(U_{nt},
U_{n, t-h}) a \right) \rightarrow  a^T \Omega_{U} a.
\end{equation*}
Then conditions of the CLT of \cite{davidson1992central} are satisfied, and we have $\sum_{t=1}^{n} U_{nt} \overset{d}\to N(0,\Omega_U)$.
\end{proof}

\begin{proof}[Proof of Theorem \ref{Thm: Prob mixdensity sigma=0}]

This proof follows \cite{moran1971maximum}. $\delta_0$ is the true value of the parameters, $\delta'=(\beta',0)$ is the limit of parameters that maximize
\eqref{eq: marginal log likelihood} for fixed $\tau=0$; $\hat\delta$ is the maximum likelihood estimators of \eqref{eq: marginal log likelihood}.

Consider first the distribution of $\hat\delta$ if $\hat\tau>0$. Since the unconditional joint distribution of $\hat\beta$ and $\hat\tau$ is multivariate normal with $N(0, \Omega_{1,1}^{-1} \Omega_{1,2}\Omega_{1,1}^{-1})$ -- see proof of Theorem \ref{Thm: Consist&Asym Marginal Like Estimator}, hence $F_2(c,\delta_0)$, the distribution of $\sqrt{n}(\hat\beta-\beta_0)|\hat\tau>0$ is skew normal based on $N(0,\Omega_{1,1}^{-1}\Omega_{1,2}\Omega_{1,1}^{-1})$.

When $\hat\tau=0$, use a Taylor expansion to the first derivatives around $\delta'$, then
\begin{equation*}
\sqrt{n}(\hat\beta -\beta') = \left(-\frac{1}{n}\sum_{t=1}^n \ddot{l}_{t}(\delta') \right)^{-1} \frac{1}{\sqrt{n}}\sum_{t=1}^n \frac{\partial l_{t}(\delta')}{\partial \beta} + o_p(1),
\end{equation*}
conditional on $n^{-1/2}\sum_{t=1}^n \partial l_{t}(\delta')/\partial \tau < 0$. As $n\to\infty$,
\[
\frac{1}{\sqrt{n}}\sum_{t=1}^n \frac{\partial l_{t}(\delta')}{\partial \delta} = \begin{pmatrix}
\frac{1}{\sqrt{n}}\sum_{t=1}^n e_{t,\beta'}x_{nt} \\
\frac{1}{2\sqrt{n}}\sum_{t=1}^n \left[ e_{t,\beta'}^2- m_t \ddot{b}(x_{t}^T\beta')\right]
\end{pmatrix} \overset{d}\to N(\begin{pmatrix} 0\\
E(S_{1,n}(\beta'))/2 \end{pmatrix}, \begin{pmatrix}
\Omega_{2} & K_{S}/2\\
K_{S}^T/2 & V_{S}/4 \end{pmatrix} )
\]
\end{proof}

\bibliography{MarginalLikelihoodBinomialTSarXiv}
\bibliographystyle{imsart-nameyear}

\end{document}